\newtheorem{proposition}{Proposition}
\newtheorem{theorem}{Theorem}
\newtheorem{lemma}{Lemma}
\theoremstyle{remark}
\newtheorem{remark}{Remark}
\numberwithin{equation}{section}
\begin{document}

\title{Mass Concentration for the Davey-Stewartson System}
\author{Geordie Richards}
\address{Department of Mathematics, University of Toronto, Toronto, Ontario, Canada}
\email{grichard@math.toronto.edu}
\subjclass[2000]{35Q55}
\keywords{Davey-Stewartson system, blow-up, mass concentration}
\date{September 2, 2009.}
\thanks{The author thanks J. Colliander and C. Sulem for suggestions related to this work.}
\maketitle
\vspace{-0.3in}
\begin{abstract}
This paper is concerned with the analysis of blow-up solutions to the elliptic-elliptic Davey-Stewartson system, which appears in the description of the evolution of surface water waves.  We prove a mass concentration property for $H^{1}$-solutions, analogous to the one known for the $L^{2}$-critical nonlinear Schr\"{o}dinger equation.  We also prove a mass concentration result for $L^{2}$-solutions.
\end{abstract}
\section{Introduction}
We study the elliptic-elliptic Davey-Stewartson system in
2 space dimensions with cubic nonlinearity
\begin{align}
\left\{
\begin{array}{ll}iu_{t}+\Delta u + \mathcal{L}(|u|^{2})u =0, \ \
t\geq 0, x\in \mathbb{R}^{2}
\\
u(0,x) = u_{0}(x)\in H^{1}(\mathbb{R}^{2}).
\end{array} \right.
\end{align}
Here
$\mathcal{L}= \nu I + \gamma\mathcal{B}$, $\nu=\pm1,\gamma>0,$ and $\mathcal{B}$ is the pseudo-differential operator with symbol
$\widehat{\mathcal{B}(f)}(\xi)=\frac{\xi_{1}^{2}}{\xi_{1}^{2}+\xi_{2}^{2}}\widehat{f}(\xi)$.

The more general Davey-Stewartson system is given by
\begin{align}
\left\{
\begin{array}{ll}iu_{t}+ \sigma u_{xx}+ u_{yy} + \nu|u|^{2}u - \phi_{x}u =0,
\\
\alpha \phi_{xx} + \phi_{yy} + \gamma (|u|^{2})_{x} = 0 ,
\end{array} \right.
\end{align} where $\sigma, \nu = \pm1$, $\gamma>0$, and $\alpha$ can be positive or negative.
These equations describe, to leading order, the (complex) amplitude $u$ and (real) velocity potential $\phi$ of a weakly nonlinear 3-dimensional water wave traveling predominantly in the $x$-direction (\cite{DS},\cite{PSSW}).  The constants $\sigma,\alpha,\nu$ and $\gamma$ depend on physical variables, such as the strength of gravity and surface tension, the depth of the fluid domain and the wave number of the wave packet.  Depending on the signs of $(\sigma,\alpha)$, the cases $(-,-),(-,+)(+,-)$ and $(+,+)$ are classified as hyperbolic-hyperbolic, hyperbolic-elliptic, elliptic-hyperbolic and elliptic-elliptic respectively.  Following the picture from \cite{DR} (see also \cite{AS}) the elliptic-elliptic equation describes a scenario with relatively large surface tension $T$, and a sufficiently large depth $h$ to achieve a ``sub-sonic'' flow: that is the group velocity of the wave packet does not exceed the velocity $\sqrt{gh}$ of long gravity waves.

We consider the elliptic-elliptic case of (1.2).  By rescaling $x$ and $\gamma$ we can take $\alpha=1$.  Applying the Fourier transform to the second equation in (1.2), we find
\begin{align*}
(\xi_{1}^{2}+\xi_{2}^{2})\hat{\phi} = -i\gamma\xi_{1}\widehat{|u|^{2}},
\end{align*}
and, by differentiating in $x$,
\begin{align*}
(\xi_{1}^{2}+\xi_{2}^{2})\widehat{\phi_{x}}=-i\xi_{1}(\xi_{1}^{2}+\xi_{2}^{2})\hat{\phi} = -\gamma\xi_{1}^{2}\widehat{|u|^{2}}.
\end{align*}
Thus, we can solve for $\widehat{\phi_{x}}$ in terms of $u$,
\begin{align*}
\widehat{\phi_{x}}= -\gamma\frac{\xi_{1}^{2}}{\xi_{1}^{2}+\xi_{2}^{2}}(\widehat{|u|^{2}}) = -\gamma\widehat{B(|u|^{2})}.
\end{align*}
By the Fourier inversion theorem $\phi_{x}=-\gamma B(|u|^{2})$, and the system (1.2) reduces to (1.1).

We will also consider the relaxation of (1.1) to the case $u_{0}\in L^{2}(\mathbb{R}^{2})$,
\begin{align}
\left\{
\begin{array}{ll}iu_{t}+\Delta u + \mathcal{L}(|u|^{2})u =0, \ \
t\geq 0, x\in \mathbb{R}^{2}
\\
u(0,x) = u_{0}(x)\in L^{2}(\mathbb{R}^{2}).
\end{array} \right.
\end{align}

Recall the existence theory for (1.1) and (1.3).

\begin{theorem}[Ghidaglia-Saut\,\cite{GS}]
For any $u_{0}\in L^{2}(\mathbb{R}^{2})$, $\exists\  T^{*}>0$ and a unique solution $u$ to $(1.3)$ such that $u\in C([0,T^{*});L^{2}(\mathbb{R}^{2}))\cap L^{4}((0,t)\times\mathbb{R}^{2})$ for all $t\in (0,T^{*})$.  Furthermore
\begin{enumerate}
\renewcommand{\labelenumi}{(\alph{enumi})}
\renewcommand{\labelenumii}{(\alph{enumii})}

\item  $T^{*}$ is maximal in the sense that if $T^{*}< \infty$, then $\|u\|_{L^{4}_{[0,T^{*}]\times \mathbb{R}^{2}}}= \infty$.
\item  The mass of the solution is conserved, that is
\begin{align*}
\|u(t)\|_{2}=\|u_{0}\|_{2}
\end{align*}
for all $t\in (0,T^{*})$.
\end{enumerate}
\end{theorem}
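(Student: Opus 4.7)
The plan is to establish local well-posedness via a contraction argument in Strichartz spaces, adapting the Cazenave--Weissler strategy for the $L^{2}$-critical nonlinear Schr\"{o}dinger equation. I recast (1.3) in Duhamel form
\begin{equation*}
u(t)=e^{it\Delta}u_{0}+i\int_{0}^{t}e^{i(t-s)\Delta}\,\mathcal{L}(|u|^{2})u(s)\,ds,
\end{equation*}
and seek a fixed point in a ball of $C([0,T];L^{2}(\mathbb{R}^{2}))\cap L^{4}((0,T)\times\mathbb{R}^{2})$ for $T$ sufficiently small.

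The two analytic inputs are: (i) the 2D Strichartz estimates, for which the pair $(q,r)=(4,4)$ is admissible, giving $\|e^{it\Delta}f\|_{L^{4}_{t,x}}\lesssim \|f\|_{2}$ together with the dual inhomogeneous estimate mapping $L^{4/3}_{t,x}$ into $L^{\infty}_{t}L^{2}_{x}\cap L^{4}_{t,x}$; and (ii) the $L^{p}$-boundedness of $\mathcal{L}=\nu I+\gamma\mathcal{B}$ for $1<p<\infty$. Since the symbol $\xi_{1}^{2}/|\xi|^{2}$ of $\mathcal{B}$ is smooth off the origin and homogeneous of degree zero, $\mathcal{B}$ is bounded on every such $L^{p}$ by the Mikhlin multiplier theorem (it is essentially a second-order Riesz transform). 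Combining these ingredients with H\"{o}lder yields the crucial nonlinear estimate
\begin{equation*}
\|\mathcal{L}(|u|^{2})u\|_{L^{4/3}_{t,x}}\leq \|\mathcal{L}(|u|^{2})\|_{L^{2}_{t,x}}\|u\|_{L^{4}_{t,x}}\lesssim \|u\|_{L^{4}_{t,x}}^{3},
\end{equation*}
together with an analogous difference estimate.

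Choose $T>0$ small enough that $\|e^{it\Delta}u_{0}\|_{L^{4}([0,T]\times\mathbb{R}^{2})}\leq \eta$ for a universal constant $\eta=\eta(\nu,\gamma)$; this is possible because $e^{it\Delta}u_{0}\in L^{4}_{t,x}(\mathbb{R}\times\mathbb{R}^{2})$ by Strichartz, so dominated convergence forces the $[0,T]$-norm to tend to zero as $T\to 0$. The Duhamel map then strictly contracts the ball $\{u:\|u\|_{L^{\infty}_{T}L^{2}_{x}}+\|u\|_{L^{4}([0,T]\times\mathbb{R}^{2})}\leq 2\eta\}$ to itself, and its unique fixed point gives the solution. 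Uniqueness on $[0,T^{*})$ follows from the same difference estimate. The blow-up alternative (a) is then standard: if $\|u\|_{L^{4}([0,T^{*}]\times\mathbb{R}^{2})}<\infty$, one partitions the interval into finitely many subintervals on which this norm is below $\eta$, and re-runs the construction from the endpoint of the last subinterval to extend $u$ strictly past $T^{*}$, contradicting maximality.

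For (b), note that since the symbol $\xi_{1}^{2}/|\xi|^{2}$ is real and even, $\mathcal{B}$ preserves real-valuedness, so $\mathcal{L}(|u|^{2})$ is real a.e.\ Pairing (1.3) with $\bar{u}$ and taking imaginary parts gives formally $\tfrac{d}{dt}\|u\|_{2}^{2}=0$; this computation is rigorous for $H^{1}$ data by the standard energy method. For general $u_{0}\in L^{2}$, approximate by $H^{1}$ data and pass to the limit using the Lipschitz dependence on data supplied by the contraction argument. The main obstacle throughout is the $L^{2}$-critical character of the nonlinearity: unlike in the subcritical setting, the existence time $T^{*}$ cannot be quantified by $\|u_{0}\|_{2}$ alone, and smallness must be extracted from the linear evolution $e^{it\Delta}u_{0}$ in $L^{4}_{t,x}$ on short time intervals rather than from $u_{0}$ itself. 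Everything else reduces to routine applications of Strichartz theory and the Mikhlin multiplier theorem.
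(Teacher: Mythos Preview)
The paper does not supply its own proof of this theorem; it is quoted from \cite{GS} as background. Your outline is the standard Cazenave--Weissler contraction argument adapted to the Davey--Stewartson nonlinearity, and it is essentially what one finds in \cite{GS}: Strichartz estimates at the admissible exponent $(4,4)$, the $L^{p}$-boundedness of $\mathcal{B}$ (the paper later uses only the $L^{2}$ case, cf.\ (3.20), but your Mikhlin argument is the right general tool), and the $L^{2}$-critical fixed-point scheme in which smallness is harvested from $\|e^{it\Delta}u_{0}\|_{L^{4}([0,T]\times\mathbb{R}^{2})}\to 0$ rather than from $\|u_{0}\|_{2}$.

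One small imprecision: the ball you wrote, $\{u:\|u\|_{L^{\infty}_{T}L^{2}_{x}}+\|u\|_{L^{4}_{T,x}}\leq 2\eta\}$, cannot contain the Duhamel iterates when $\|u_{0}\|_{2}$ is large, since $\|e^{it\Delta}u_{0}\|_{L^{\infty}_{T}L^{2}_{x}}=\|u_{0}\|_{2}$. The usual remedy is either to contract in $L^{4}_{T,x}$ alone on a ball of radius $2\eta$ (and read off the $C_{T}L^{2}_{x}$ regularity afterward from the inhomogeneous Strichartz estimate), or to use a ball of the form $\{\|u\|_{L^{\infty}_{T}L^{2}_{x}}\leq 2\|u_{0}\|_{2},\ \|u\|_{L^{4}_{T,x}}\leq 2\eta\}$. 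With that adjustment your argument is complete and matches the approach of the cited reference.
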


\begin{theorem}[Ghidaglia-Saut\,\cite{GS}]
If $u_{0}\in H^{1}(\mathbb{R}^{2})$, the corresponding solution $u$ to $(1.1)$ satisfies $u\in C([0,T^{*});H^{1}(\mathbb{R}^{2}))\cap C^{1}([0,T^{*});H^{-1}(\mathbb{R}^{2}))$.  Furthermore
\begin{enumerate}
\renewcommand{\labelenumi}{(\alph{enumi})}
\renewcommand{\labelenumii}{(\alph{enumii})}
\item  $T^{*}$ is maximal in the sense that if $T^{*}< \infty$, then $\lim_{t\uparrow T^{*}}\|u(t)\|_{H^{1}(\mathbb{R}^{2})}=\infty$.
\item  The mass and energy of the solution are conserved, that is
\begin{align*}
\|u(t)\|_{2}=\|u_{0}\|_{2} \\
E(u(t))=E(u_{0})
\end{align*}
for all $t\in (0,T^{*})$, where
\begin{align*}
E(u)=\frac{1}{2}\int|\nabla u|^{2}dx - \frac{1}{4}
\int \mathcal{L}(|u|^{2})|u|^{2}dx.
\end{align*}
\end{enumerate}
\end{theorem}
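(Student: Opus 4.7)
The plan is to treat the problem as a cubic Schr\"odinger-type equation with a nonlocal but well-behaved nonlinearity, use Strichartz estimates to get local existence in $H^{1}$, and then obtain the conservation laws from a regularization argument. Part (a) then follows from the fact that the local existence time depends only on $\|u_{0}\|_{H^{1}}$.

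The key preliminary observation is that $\mathcal{L}=\nu I+\gamma\mathcal{B}$ is bounded on $L^{p}(\mathbb{R}^{2})$ for every $1<p<\infty$ and on every $H^{s}$. Indeed, the symbol $m(\xi)=\xi_{1}^{2}/|\xi|^{2}$ is homogeneous of degree zero and smooth on $\mathbb{R}^{2}\setminus\{0\}$, so it satisfies the H\"ormander--Mikhlin conditions. Moreover $m$ is real and even, so $\mathcal{B}$ is self-adjoint on $L^{2}$ and preserves reality, and as a Fourier multiplier it commutes with $\nabla$. With this in hand, I would run a contraction-mapping scheme on the Duhamel operator
\[
\Phi(u)(t)=e^{it\Delta}u_{0}+i\int_{0}^{t} e^{i(t-s)\Delta}\mathcal{L}(|u|^{2})u\,ds
\]
in a ball of $C([0,T];H^{1})\cap L^{q}_{t}W^{1,r}_{x}$ for an admissible Strichartz pair $(q,r)$, using the $L^{p}$-boundedness of $\mathcal{L}$, H\"older's inequality, and the two-dimensional embedding $H^{1}\hookrightarrow L^{p}$ (any finite $p$) to estimate the cubic term. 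This yields a unique solution on $[0,T]$ with $T$ depending only on $\|u_{0}\|_{H^{1}}$, giving the blow-up alternative (a) by the usual iteration argument.

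For the conservation laws I would first regularize the initial data to produce smoother solutions $u^{\epsilon}\in C([0,T];H^{2})$, establish the identities there, and pass to the limit using continuous dependence. Pairing the equation with $\bar u^{\epsilon}$ and taking imaginary parts gives $\tfrac{d}{dt}\|u^{\epsilon}\|_{2}^{2}=0$, because $\int\mathcal{L}(|u^{\epsilon}|^{2})\,|u^{\epsilon}|^{2}\,dx$ is real by the self-adjointness and real symbol of $\mathcal{L}$. Pairing with $\bar u^{\epsilon}_{t}$ and taking real parts, the linear part contributes $\tfrac{1}{2}\tfrac{d}{dt}\|\nabla u^{\epsilon}\|_{2}^{2}$, while the self-adjointness of $\mathcal{L}$ lets one rewrite the cubic contribution as the time derivative of $\tfrac{1}{4}\int\mathcal{L}(|u^{\epsilon}|^{2})|u^{\epsilon}|^{2}\,dx$, producing $\tfrac{d}{dt}E(u^{\epsilon})=0$.

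The $C^{1}([0,T^{*});H^{-1})$ regularity is immediate from the equation $u_{t}=i\Delta u+i\mathcal{L}(|u|^{2})u$: the linear part lies in $C([0,T^{*});H^{-1})$, while $u\in L^{p}$ for all finite $p$ by Sobolev embedding, combined with the $L^{p}$-boundedness of $\mathcal{L}$, gives $\mathcal{L}(|u|^{2})u\in L^{4/3}\hookrightarrow H^{-1}$ in $\mathbb{R}^{2}$. The main technical obstacle is packaging the nonlocal operator $\mathcal{L}$ into the Strichartz contraction at the $H^{1}$ level: one needs $\mathcal{L}$ to commute with $\nabla$ and remain bounded on the relevant $L^{p}$ spaces, which is exactly what the Mikhlin multiplier theorem provides for this symbol. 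Once that is in place the rest of the argument is a faithful adaptation of the standard Cazenave--Weissler theory for the $L^{2}$-critical cubic NLS in two dimensions.
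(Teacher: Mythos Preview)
The paper does not actually prove this theorem; it is quoted as a known result of Ghidaglia--Saut \cite{GS} and is used as background for the mass-concentration analysis that follows. So there is no ``paper's own proof'' to compare against.

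That said, your sketch is a correct outline of the standard argument, and it is essentially the route taken in \cite{GS} (see also \cite{Ca}). The crucial structural point is exactly the one you isolate: the symbol $\xi_{1}^{2}/|\xi|^{2}$ is a smooth homogeneous-of-degree-zero multiplier, so $\mathcal{L}$ is bounded on every $L^{p}$, $1<p<\infty$, commutes with derivatives, and is self-adjoint on $L^{2}$. Once that is in place, the local theory in $H^{1}$ via Strichartz estimates and a contraction on the Duhamel formulation goes through verbatim as for the cubic NLS in $\mathbb{R}^{2}$, the existence time depends only on $\|u_{0}\|_{H^{1}}$ (giving the blow-up alternative), and the conservation laws follow from the usual regularization-and-pairing argument, with the self-adjointness of $\mathcal{L}$ supplying the exact-derivative structure in the energy computation. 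Your justification of $u_{t}\in C([0,T^{*});H^{-1})$ via $\mathcal{L}(|u|^{2})u\in L^{4/3}\hookrightarrow H^{-1}$ is also fine.
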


We review what is known (see also \cite{Ci},\cite{PSSW}) about standing wave and blow-up solutions to (1.1).
There are standing wave solutions to (1.1) of the form
$u(t,x)=v(x)e^{it}$.

\begin{theorem}[Papanicolaou et al\,\cite{PSSW}]  Taking $\nu=1$ \emph{(}focusing\emph{)}, we have the optimal estimate
\begin{align}
\int \mathcal{L}(|u|^{2})|u|^{2}dx \leq C_{opt}\|\nabla
u\|_{2}^{2}\|u\|_{2}^{2}.
\end{align}
 Furthermore $C_{opt}=\frac{2}{\|R\|_{2}^{2}}$ for some $R\in H^{1}(\mathbb{R}^{2})$ such that $R(x)>0$ $\forall \,x\in\mathbb{R}^{2}$, and
$u(t,x)=R(x)e^{it}$ solves $(1.1)$.
\end{theorem}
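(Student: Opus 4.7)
The plan is to follow Weinstein's variational approach to sharp Gagliardo-Nirenberg inequalities. Define the quotient
$$J(u) = \frac{\|\nabla u\|_2^2 \, \|u\|_2^2}{\int \mathcal{L}(|u|^2)\,|u|^2 \, dx}$$
on $H^1(\mathbb{R}^2)\setminus\{0\}$; the claim is equivalent to showing that $C_{opt}^{-1} := \inf J$ is strictly positive and is attained. First I would verify two preliminary facts. Since the symbol of $\mathcal{B}$ is bounded by $1$, the operator $\mathcal{L}$ is bounded on $L^2$, so by Plancherel $\int \mathcal{L}(|u|^2)|u|^2\,dx \leq C\|u\|_4^4$, and the standard Gagliardo-Nirenberg inequality $\|u\|_4^4 \lesssim \|\nabla u\|_2^2 \|u\|_2^2$ then gives $J \geq c > 0$. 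Second, because the symbol of $\mathcal{B}$ is homogeneous of degree $0$ the operator $\mathcal{L}$ commutes with dilations, and combined with the two-dimensional scaling this makes $J$ invariant under both $u(x)\mapsto\lambda u(x)$ and $u(x)\mapsto u(\mu x)$. This lets me take a minimizing sequence $u_n$ normalized by $\|u_n\|_2 = \|\nabla u_n\|_2 = 1$, so that $\int \mathcal{L}(|u_n|^2)|u_n|^2\,dx \to C_{opt}$.

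Next I would extract a minimizer. The anisotropy of the symbol of $\mathcal{B}$ destroys full rotation invariance and blocks the usual Schwarz symmetrization shortcut, and this is the main technical obstacle. I would instead apply Lions' concentration-compactness principle to the densities $|u_n|^2$. Vanishing is excluded because $\int \mathcal{L}(|u_n|^2)|u_n|^2\,dx \to C_{opt} > 0$, together with the bound $\int \mathcal{L}(|u_n|^2)|u_n|^2\,dx \leq C\|u_n\|_4^4$, forces a uniform lower bound on $\|u_n\|_4$ and hence nontrivial concentration on some ball. Dichotomy is ruled out by the strict subadditivity of the minimization, which follows from its scale invariance. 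The surviving compact scenario yields translations $x_n$ such that $u_n(\cdot - x_n)\rightharpoonup R$ weakly in $H^1$, and a Brezis-Lieb argument combined with the norm constraints upgrades this to strong $H^1$ convergence, so that $R$ attains the infimum.

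Finally, the Euler-Lagrange equation for $R$, after absorbing the Lagrange multipliers into a rescaling $R(x)\mapsto \alpha R(\beta x)$, takes the form
$$-\Delta R + R - \mathcal{L}(R^2)R = 0,$$
so that $u(t,x) = R(x)e^{it}$ solves (1.1). Testing this equation against $R$, and combining with the dilational Pohozaev-type identity obtained by differentiating the associated action at the scaled family $R_\mu(x) = R(\mu x)$, produces
$$\|\nabla R\|_2^2 + \|R\|_2^2 = \int \mathcal{L}(R^2)R^2\,dx, \qquad \int \mathcal{L}(R^2)R^2\,dx = 2\|R\|_2^2,$$
whence $\|\nabla R\|_2^2 = \|R\|_2^2$ and $C_{opt} = J(R)^{-1} = 2/\|R\|_2^2$. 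Positivity $R(x) > 0$ is obtained by observing that $|R|$ is again a minimizer (since $\|\nabla|R|\|_2 \leq \|\nabla R\|_2$ must be an equality at the infimum), so $|R|$ solves the same elliptic equation, and the strong maximum principle applied to $(-\Delta + 1 - \mathcal{L}(R^2))|R| = 0$---using the Calder\'on-Zygmund regularity of $\mathcal{L}(R^2)$ to make the zeroth-order coefficient locally bounded---gives strict positivity.
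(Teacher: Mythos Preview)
The paper does not prove this theorem; it is quoted from \cite{PSSW} (see also \cite{Ci}) and used as a black box. So there is no in-paper argument to compare against. Your outline is essentially the argument of those references: Weinstein's variational characterisation of the best constant, with Lions' concentration--compactness replacing Schwarz symmetrisation because the symbol $\xi_1^2/|\xi|^2$ is anisotropic. Your Pohozaev computation is correct: in two dimensions the dilation $R\mapsto R(\mu\cdot)$ leaves $\|\nabla R\|_2^2$ fixed and scales both $\|R\|_2^2$ and $\int\mathcal{L}(R^2)R^2$ by $\mu^{-2}$, giving $\int\mathcal{L}(R^2)R^2=2\|R\|_2^2$, and combining with the Nehari identity yields $\|\nabla R\|_2^2=\|R\|_2^2$ and $C_{opt}=2/\|R\|_2^2$.

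Two places where your sketch is thin. First, ``dichotomy is ruled out by strict subadditivity, which follows from scale invariance'' is not quite the mechanism: for this unconstrained quotient the point is rather that if the sequence splits into two far-apart pieces $u_n^1,u_n^2$, then the cross terms in $\int\mathcal{L}(|u_n|^2)|u_n|^2$ vanish (this uses that $\mathcal{L}$ is an $L^2$-bounded translation-invariant operator, exactly as in Lemmas~2--3 of the present paper), so one of the pieces already realises the same value of $J$ up to $o(1)$; iterating or passing to that piece gives compactness. Second, ``Calder\'on--Zygmund regularity of $\mathcal{L}(R^2)$'' alone does not put the potential in $L^\infty_{\mathrm{loc}}$; you need an elliptic bootstrap on $-\Delta R+R=\mathcal{L}(R^2)R$ (the right-hand side is in every $L^p$ since $R\in H^1(\mathbb{R}^2)\subset\bigcap_{p<\infty}L^p$ and $\mathcal{L}$ is Calder\'on--Zygmund) to get $R\in W^{2,p}\hookrightarrow C^0$, after which $R^2\in H^2$ and hence $\mathcal{L}(R^2)\in H^2\subset L^\infty$, and then the strong maximum principle applies. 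With these clarifications your proposal is a correct proof along the lines of \cite{PSSW,Ci}.
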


\begin{remark}
The uniqueness of ground state standing wave solutions to (1.1) (ie. solutions $u(t,x)=R(x)e^{it}$ with $R(x)>0$ $\forall \,x\in\mathbb{R}^{2}$) is an open problem (see \cite{Oh} for discussion).  Thus the $L^{2}$ norm ($\|R\|_{2}$) for such a solution is not apriori well-defined.
Rather, the inequality (1.4) is sharp \cite{PSSW}, and the optimal constant $C_{opt}>0$ is therefore unique.
\end{remark}

All sufficiently well-localized negative energy initial data blow-up in finite time:

\begin{theorem}[Ghidaglia-Saut\,\cite{GS}]
Let $\Sigma := \{v\in H^{1}(\mathbb{R}^{2}):|x|v\in L^{2}(\mathbb{R}^{2})\}$.  The following holds true
\begin{enumerate}
\renewcommand{\labelenumi}{(\alph{enumi})}
\renewcommand{\labelenumii}{(\alph{enumii})}
\renewcommand{\labelenumiii}{(\alph{enumiii})}
\item  If $-\nu\geq \gamma$, all solutions of $(1.1)$ are global in time.
\item  An element $v\in \Sigma$ satisfying $E(v)<0$ exists if and only if $-\nu< \gamma$.
\item  If $u_{0}\in \Sigma$ satisfies $E(u_{0})<0$, the corresponding solution $u$ to $(1.1)$ blows up in finite time.
\end{enumerate}
\end{theorem}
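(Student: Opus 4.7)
The plan is to exploit the elementary bound $0 \le \int \mathcal{B}(|u|^{2})|u|^{2}dx \le \|u\|_{4}^{4}$, which follows from Plancherel since the symbol $\xi_{1}^{2}/(\xi_{1}^{2}+\xi_{2}^{2})$ of $\mathcal{B}$ lies in $[0,1]$. Writing $\int \mathcal{L}(|u|^{2})|u|^{2}dx = \nu\|u\|_{4}^{4} + \gamma\int \mathcal{B}(|u|^{2})|u|^{2}dx$, this bound controls the nonlinear part of the energy in terms of $\|u\|_{4}^{4}$.

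For (a), the hypothesis $-\nu \ge \gamma$ forces $\nu=-1$ and $0<\gamma\le 1$, so the bound above gives $\int \mathcal{L}(|u|^{2})|u|^{2}dx \le (\gamma-1)\|u\|_{4}^{4}\le 0$. Hence $E(u)\ge \tfrac12\|\nabla u\|_{2}^{2}$, and energy plus mass conservation from Theorem 2 keep $\|u(t)\|_{H^{1}}$ uniformly bounded, so $T^{*}=\infty$. The ``only if'' direction of (b) follows by contrapositive from the same estimate applied to a hypothetical negative-energy $v$.

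For the ``if'' direction of (b), it suffices to exhibit $v\in\Sigma$ with $I(v):=\int\mathcal{L}(|v|^{2})|v|^{2}dx>0$, because then $E(\mu v)=\tfrac{\mu^{2}}{2}\|\nabla v\|_{2}^{2}-\tfrac{\mu^{4}}{4}I(v)\to -\infty$ as $\mu\to\infty$. If $\nu=+1$ any $v\ne 0$ works since then $I(v)\ge \|v\|_{4}^{4}$. If $\nu=-1$ and $\gamma>1$, I would take a Schwartz $v$ stretched strongly in the $x_{2}$-direction (for instance $v(x)=\exp(-x_{1}^{2}/2-\epsilon x_{2}^{2}/2)$ with $\epsilon\downarrow 0$); then $\widehat{|v|^{2}}$ concentrates on the $\xi_{1}$-axis, so $\int\mathcal{B}(|v|^{2})|v|^{2}dx/\|v\|_{4}^{4}\to 1$ and $I(v)/\|v\|_{4}^{4}\to \gamma-1>0$.

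For (c), the symbol of $\mathcal{L}$ is homogeneous of degree zero, so the nonlinearity is mass-critical and one expects the Glassey-type virial identity $V''(t)=16\,E(u_{0})$ for $V(t):=\int|x|^{2}|u(t,x)|^{2}dx$. Granting this, $V(t)=V(0)+V'(0)t+8E(u_{0})t^{2}$ is eventually negative when $E(u_{0})<0$, contradicting $V\ge 0$; hence $T^{*}<\infty$ by Theorem 2(a). The main obstacle is deriving the virial identity rigorously: one must justify differentiating $V$ twice (by showing the class $\Sigma$ is preserved under the flow up to $T^{*}$), then integrate by parts twice against the equation, being careful with the nonlocal term $\mathcal{B}(|u|^{2})u$. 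The fact that $\mathcal{B}$ is self-adjoint and commutes with dilations produces a Pohozaev-type identity that yields exactly the mass-critical coefficient $16$.
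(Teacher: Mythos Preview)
Your argument is correct and, for part~(c), matches the paper's own treatment: the paper does not prove Theorem~4 but in Remark~2 derives~(c) from exactly the virial identity you invoke (they write the integrated form $\int|x|^{2}|u(t)|^{2}dx = 4E(u_{0})t^{2}+ct+\int|x|^{2}|u_{0}|^{2}dx$; the discrepancy with your coefficient $8$ is immaterial, and your normalization $V''=16E$ is the standard one for this energy convention). For parts~(a) and~(b) the paper simply cites \cite{GS}, so your elementary arguments via the symbol bound $0\le \xi_{1}^{2}/|\xi|^{2}\le 1$ and the anisotropically stretched Gaussian go beyond what the paper itself presents, but they are sound.
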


\begin{remark}
Part (c) of Theorem 4 follows from the following virial identity, first obtained in \cite{AS},
\begin{align}
\int|x|^{2}|u(t)|^{2}dx = 4E(u_{0})t^{2} + ct + \int|x|^{2}|u_{0}|^{2}dx.
\end{align}
For a non-zero solution $u$ to (1.1), the left-hand side of (1.5) is manifestly positive.  But, if $E(u_{0})<0$, the right-hand side of (1.5) becomes negative in finite time.  The corresponding solution $u$ cannot, therefore, be global in time.  By part (a) of Theorem 2, $u$ is a blow-up solution.
\end{remark}

\begin{remark}
A scaling argument yields a lower bound on the blow-up rate for solutions to (1.1) with $T^{*}<\infty$:
\begin{align}
\|\nabla u(t)\|_{2} \geq  \frac{C}{\sqrt{T^{*}-t}}.
\end{align}
If $u(t,x)$ solves (1.1) on $[0,T^{*})$, then $\frac{1}{\lambda}u(\frac{t}{\lambda^{2}},\frac{x}{\lambda})$, for $\lambda>0$, solves (1.1) on $[0,\lambda^{2}T^{*})$.  To prove (1.6),
assume that $u$ is a finite time blow-up solution to (1.1).  For fixed $t\in[0,T^{*})$, consider the solution to (1.1) formed by the rescaling
\begin{align*}
v^{t}(\tau,x)=\|\nabla u(t)\|_{2}^{-1}u(t+\|\nabla u(t)\|_{2}^{-2}\tau,\|\nabla u(t)\|_{2}^{-1}x).
\end{align*}
We compute $\|v^{t}(0)\|_{H^{1}(\mathbb{R}^{2})}=C$ independent of $t$.  From the fixed point argument used to prove Theorem 2 (see \cite[p. 487]{GS}), $\exists \, \tau_{0}=\tau_{0}(\|v^{t}(0)\|_{H^{1}(\mathbb{R}^{2})})$ independent of $t$ such that $v^{t}$ is defined on $[0,\tau_{0}]$.  This implies that $t+ \|\nabla u(t)\|_{2}^{-2}\tau_{0} \leq T^{*}$, and (1.6) is proven.
\end{remark}

Global existence in $H^{1}(\mathbb{R}^{2})$ for solutions to (1.1) with $\|u_{0}\|_{2}<\sqrt{\frac{2}{C_{opt}}}$ is a corollary of Theorem 3 \cite{PSSW}.  This result is optimal (with respect to mass) due to an explicit blow-up solution induced by a pseudo-conformal invariance:  If $u$ solves (1.1) for $t\in[1,\infty)$, then
\begin{align}
 pc[u](t,x):= e^{-i\frac{|x|^{2}}{4t}}\frac{1}{|t|}\overline{u}(-\frac{1}{t},\frac{x}{t})
\end{align}
solves (1.1) for $t\in[-1,0)$.  We can apply this symmetry to the ground state standing wave solution $u_{R}(t,x):=R(x)e^{it}$, with $\|R\|_{2}=\sqrt{\frac{2}{C_{opt}}}$,  to find a blow-up solution given by
\begin{align}pc[u_{R}](t,x) = e^{-i\frac{|x|^{2}}{4t}+\frac{i}{t}}\frac{1}{|t|}R(\frac{x}{t}).\end{align}  This solution to (1.1) satisfies $ \|pc[u_{R}]\|_{2} = \|R\|_{2}=\sqrt{\frac{2}{C_{opt}}}$ and $\|\nabla pc[u_{R}](t)\|_{2} \sim \frac{1}{|t|}$ as $t\uparrow 0$; $pc[u_{R}]$ is a blow-up solution with the minimal mass $\sqrt{\frac{2}{C_{opt}}}$.

Taking $\gamma=0$, $\nu =1$, (1.1) reduces to the focusing $L^{2}$-critical (cubic in $\mathbb{R}^{2}$) nonlinear Schr\"{o}dinger equation (NLS),
\begin{align}
\left\{
\begin{array}{ll}iu_{t}+\Delta u  =-|u|^{2}u, \ \
t\geq 0, x\in \mathbb{R}^{2}
\\
u(0,x) = u_{0}(x)\in H^{1}(\mathbb{R}^{2}).
\end{array} \right.
\end{align}
Standing wave and blow-up solutions to (1.9) have been exposed in a series of works.
There is a \textit{unique} ground state standing wave solution to (1.9), given by $u_{Q}(t,x):=Q(x)e^{it}$.
When $\gamma=0$, Theorem 3 is a well-known sharp inequality \cite{We}.  This inequality is optimized by the
ground state profile $Q(x)$, and consequently, for initial data satisfying $\|u_{0}\|_{2}<\|Q\|_{2}$, the corresponding solution to (1.9) is global-in-time.  Applying the pseudo-conformal symmetry (1.7) to the ground state standing wave solution $u_{Q}(t,x)=Q(x)e^{it}$, we obtain a blow-up solution $pc[u_{Q}](t,x)$ with the minimal mass $\|u_{0}\|_{2}=\|Q\|_{2}$.  Up to symmetries of (1.9), $pc[u_{Q}](t,x)$ is the \textit{only} minimal mass blow-up solution \cite{M1}.  All blow-up solutions to (1.9) concentrate mass.  More precisely, they concentrate at least the mass $\|Q\|_{2}$ into a parabolically shrinking window as $t$ approaches the blow-up time $T^{*}$ \cite{BV},\cite{HK},\cite{MT}.  For solutions with mass $\|u_{0}\|_{2}$ slightly larger than $\|Q\|_{2}$, the existence of two distinct blow-up regimes, precise norm explosion rates and asymptotic profile properties have been established \cite{MR}.

This paper is motivated by asymptotic and numerical analyses \cite{PSSW} which reveal a similar blow-up phenomenon for (1.1) and (1.9).  We prove a mass concentration property for blow-up solutions to (1.1), analogous to the one known for (1.9), by adapting methods from \cite{HK}.  We use a profile decomposition and Theorem 3 to prove a compactness property for bounded sequences in $H^{1}(\mathbb{R}^{2})$.  This property is then applied to spatially rescaled snapshots (in time) of a blow-up solution $u$ to prove mass concentration.  Here are the results we obtain:

\begin{theorem}
Let $\{v_{n}\}_{n=1}^{\infty}$ be a bounded sequence in
$H^{1}(\mathbb{R}^{2})$ such that
\begin{align}
\limsup_{n\rightarrow \infty}\|\nabla v_{n}\|^{2}_{2} \leq M^{2}, \ \
\limsup_{n\rightarrow \infty}
\int\mathcal{L}(|v_{n}|^{2})|v_{n}|^{2}dx \geq m^{4},
\end{align}
for some $0<m,M<\infty$.  Then there exists
$\{x_{n}\}_{n=1}^{\infty} \subset \mathbb{R}^{2}$ such that up to a
subsequence
\begin{align*}
v_{n}(\cdot + x_{n})\rightharpoonup V \text{ weakly in $H^{1}(\mathbb{R}^{2})$ with } \|V\|_{2}
\geq \frac{m^{2}}{M\sqrt{C_{opt}}}.
\end{align*}
\end{theorem}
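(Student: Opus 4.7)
The plan is to prove Theorem 5 via a Gérard-type profile decomposition for bounded sequences in $H^1(\mathbb{R}^2)$ combined with the sharp inequality (1.4) applied profile-by-profile, in the spirit of Hmidi-Keraani's argument for the $L^2$-critical NLS. The decomposition asserts that, after extracting a subsequence, there exist profiles $V^j \in H^1(\mathbb{R}^2)$ and cores $x_n^j \in \mathbb{R}^2$ with $|x_n^j - x_n^k|\to\infty$ for $j\neq k$, such that for every $\ell\geq 1$
\begin{align*}
v_n(x) = \sum_{j=1}^{\ell} V^j(x - x_n^j) + r_n^{\ell}(x),
\end{align*}
where $V^j$ is the weak $H^1$-limit of $v_n(\cdot + x_n^j)$ and $\limsup_{n\to\infty}\|r_n^{\ell}\|_{L^4}\to 0$ as $\ell\to\infty$.

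With the decomposition in hand, I would establish two asymptotic orthogonality identities:
\begin{align*}
\|\nabla v_n\|_2^2 &= \sum_{j=1}^{\ell}\|\nabla V^j\|_2^2 + \|\nabla r_n^{\ell}\|_2^2 + o_{n\to\infty}(1), \\
\int \mathcal{L}(|v_n|^2)|v_n|^2\,dx &= \sum_{j=1}^{\ell}\int \mathcal{L}(|V^j|^2)|V^j|^2\,dx + \int \mathcal{L}(|r_n^{\ell}|^2)|r_n^{\ell}|^2\,dx + o_{n\to\infty}(1).
\end{align*}
Combined with (1.10), the first yields $\sum_j \|\nabla V^j\|_2^2 \leq M^2$. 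Since $\mathcal{L}$ is bounded on $L^2$, the remainder's quartic functional is controlled by $C\|r_n^{\ell}\|_4^4\to 0$ as $\ell\to\infty$, so (1.10) and the second identity give $\sum_j \int \mathcal{L}(|V^j|^2)|V^j|^2\,dx \geq m^4$. Applying (1.4) to each profile and using the gradient orthogonality,
\begin{align*}
m^4 \;\leq\; \sum_j \int \mathcal{L}(|V^j|^2)|V^j|^2\,dx \;\leq\; C_{opt}\sum_j \|\nabla V^j\|_2^2\,\|V^j\|_2^2 \;\leq\; C_{opt}M^2\sup_j \|V^j\|_2^2.
\end{align*}
The supremum is attained since the analogous $L^2$-orthogonality $\sum_j \|V^j\|_2^2 \leq \liminf_n \|v_n\|_2^2 < \infty$ forces $\|V^j\|_2\to 0$; selecting $j_0$ achieving it and setting $x_n := x_n^{j_0}$, $V := V^{j_0}$ yields the claimed lower bound, with the weak convergence $v_n(\cdot+x_n)\rightharpoonup V$ built into the profile decomposition.

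The main obstacle is proving the quartic orthogonality identity for the nonlocal functional $\int \mathcal{L}(|v|^2)|v|^2\,dx$. For local quantities such as $\|\cdot\|_4^4$, the usual Pythagorean expansion follows from disjointness of supports as $|x_n^j - x_n^k|\to\infty$, but here the pseudo-differential operator $\mathcal{B}$ couples values at widely separated points, so vanishing of cross-terms is not automatic. I expect to handle this by approximating profiles by Schwartz functions, invoking the $L^p$-boundedness of $\mathcal{B}$ for $1<p<\infty$, and exploiting the asymptotic disjointness of translates to show that every cross-term in the full expansion of $\int \mathcal{L}(|v_n|^2)|v_n|^2\,dx$ contributes only $o_{n\to\infty}(1)$.
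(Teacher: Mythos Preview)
Your proposal is correct and follows essentially the same route as the paper: profile decomposition \`a la Hmidi--Keraani, decoupling of the nonlocal quartic functional $\int\mathcal{L}(|v|^2)|v|^2$ using the $L^2$-boundedness of $\mathcal{L}$ together with the separation $|x_n^j-x_n^k|\to\infty$, then the sharp inequality (1.4) applied profile-by-profile. Two minor differences worth noting: the paper only establishes the one-sided inequality $\limsup_n\int\mathcal{L}(|v_n|^2)|v_n|^2\,dx\le\sum_j\int\mathcal{L}(|V^j|^2)|V^j|^2\,dx$ (its Lemma~1), which is all that is needed and is slightly cheaper than your full Pythagorean identity; and the weak convergence $v_n(\cdot+x_n^{j_0})\rightharpoonup V^{j_0}$ is not literally built into the version of the decomposition used there, so the paper supplies a short diagonal-subsequence argument to recover it.
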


\begin{theorem}
Let $u$ be a solution of $(1.1)$ which blows up in finite time $T^{*}>0$,
and $\lambda(t)>0$ any function such that $\lambda (t)\|\nabla
u(t)\|_{2} \rightarrow +\infty$ as $t\uparrow T^{*}$.  Then, $\exists
\ y(t)\in \mathbb{R}^{2}$ such that
\begin{align*}
\liminf_{t \uparrow T^{*}}\int_{|x-y(t)|\leq
\lambda(t)}|u(t,x)|^{2}dx \geq \frac{2}{C_{opt}}.
\end{align*}
\end{theorem}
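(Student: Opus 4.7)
The strategy is the $L^{2}$-critical rescaling argument from NLS mass-concentration theory (\cite{HK}), combined with the profile-compactness result of Theorem 5. Fix any sequence $t_{n}\uparrow T^{*}$ and form the critical rescaling
\[
\rho_{n} := \|\nabla u(t_{n})\|_{2}^{-1},\qquad v_{n}(x) := \rho_{n}\,u(t_{n},\rho_{n}x),
\]
so $\rho_{n}\to 0$ as $n\to\infty$. The plan is to verify that $\{v_{n}\}$ lies in the setting of Theorem 5, apply that theorem to extract a profile $V$ with $\|V\|_{2}^{2}\geq 2/C_{opt}$, and then pull the concentration back to $u$ via the centers $y_{n}:=\rho_{n}x_{n}$ in the physical variable.

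The verification of the hypotheses of Theorem 5 rests on two scaling identities. Because (1.1) is $L^{2}$-critical, $\|v_{n}\|_{2}=\|u_{0}\|_{2}$ and $\|\nabla v_{n}\|_{2}=1$. Because the symbol $\xi_{1}^{2}/(\xi_{1}^{2}+\xi_{2}^{2})$ of $\mathcal{B}$ is $0$-homogeneous, $\mathcal{L}$ commutes with dilations, so a direct computation gives $E(v_{n})=\rho_{n}^{2}E(u_{0})$. Energy conservation for $u$ together with $\rho_{n}\to 0$ then forces $E(v_{n})\to 0$, and combined with $\|\nabla v_{n}\|_{2}^{2}=1$ this yields
\[
\int \mathcal{L}(|v_{n}|^{2})|v_{n}|^{2}\,dx \;\longrightarrow\; 2.
\]
Hence Theorem 5 applies with $M=1$ and $m^{4}=2$, producing (along a subsequence) translations $x_{n}\in\mathbb{R}^{2}$ and a weak $H^{1}$-limit $V$ of $v_{n}(\cdot+x_{n})$ with $\|V\|_{2}^{2}\geq 2/C_{opt}$.

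To transfer the concentration to $u$, set $y_{n}:=\rho_{n}x_{n}$. A change of variables gives, for any fixed $R>0$,
\[
\int_{|z-y_{n}|\leq \rho_{n}R} |u(t_{n},z)|^{2}\,dz \;=\; \int_{|x|\leq R} |v_{n}(x+x_{n})|^{2}\,dx,
\]
and by Rellich's theorem (weak $H^{1}$ convergence implies strong $L^{2}_{loc}$ convergence) the right-hand side tends to $\int_{|x|\leq R}|V|^{2}\,dx$. The hypothesis on $\lambda$ is precisely $\lambda(t_{n})/\rho_{n}\to\infty$, so $\rho_{n}R\leq \lambda(t_{n})$ for $n$ large; after sending $R\to\infty$ one obtains
\[
\liminf_{n\to\infty}\int_{|z-y_{n}|\leq \lambda(t_{n})}|u(t_{n},z)|^{2}\,dz \;\geq\; \|V\|_{2}^{2} \;\geq\; \frac{2}{C_{opt}}.
\]

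To upgrade this subsequential statement to the theorem as written, I would define $y(t)$ to be a near-maximizer of $y\mapsto\int_{|x-y|\leq\lambda(t)}|u(t,x)|^{2}\,dx$ (which attains its supremum since $u(t)\in L^{2}$). If $\liminf_{t\uparrow T^{*}}\int_{|x-y(t)|\leq\lambda(t)}|u(t)|^{2}\,dx<2/C_{opt}$, then some sequence $t_{n}\uparrow T^{*}$ would force the supremum strictly below $2/C_{opt}$, contradicting the subsequential bound just established. The main delicate point in this plan is the scaling identity for the non-local operator $\mathcal{L}$: the Fourier-side $0$-homogeneity of the symbol of $\mathcal{B}$ is exactly what makes $E$ transform as $E(v_{n})=\rho_{n}^{2}E(u_{0})$ under the $L^{2}$-critical rescaling, thereby sending the energy to zero and saturating the sharp inequality (1.4) in the limit. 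Everything else is a routine adaptation of the $L^{2}$-critical NLS argument.
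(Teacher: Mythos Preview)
Your proposal is correct and follows essentially the same route as the paper: the $L^{2}$-critical rescaling $v_{n}=\rho_{n}u(t_{n},\rho_{n}\cdot)$, the verification that $\|\nabla v_{n}\|_{2}=1$ and $E(v_{n})=\rho_{n}^{2}E(u_{0})\to 0$ so that $\int\mathcal{L}(|v_{n}|^{2})|v_{n}|^{2}\to 2$, the application of Theorem~5 with $M=1$, $m^{4}=2$, and the passage from the rescaled profile back to $u$ via $\lambda(t_{n})/\rho_{n}\to\infty$ are all exactly what the paper does. The only cosmetic differences are that you invoke Rellich's theorem for strong $L^{2}_{loc}$ convergence where the paper uses weak lower semicontinuity of the $L^{2}$-norm on balls, and you phrase the upgrade from subsequences to the full $\liminf$ as a contradiction argument while the paper states it directly; both variants are equivalent and standard.
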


\begin{remark}
Recall from Theorem 3 that $\frac{2}{C_{opt}}=\|R\|_{2}^{2}$ for a function $R\in H^{1}(\mathbb{R}^{2})$ such that $u(t,x)=R(x)e^{it}$ solves (1.1) and $R(x)>0 \ \forall \,x\in \mathbb{R}^{2}$.
\end{remark}

\begin{remark}

If $\lambda(t)\gtrsim (T^{*}-t)^{1/2-\epsilon}$ for a small $\epsilon>0$, then by (1.6), $\lambda(t)\|\nabla u(t)\|_{2} \geq C(T^{*}-t)^{-\epsilon} \rightarrow \infty$ as $t\uparrow T^{*}$.  Thus Theorem 6 holds for such a function $\lambda(t)$, and implies a (nearly) parabolic mass concentration effect for blow-up solutions to (1.1).  Observe that the blow-up solution (1.8) concentrates mass within a smaller conic window at time $T^{*}=0$: $\lambda(t)\gtrsim|t|^{1-}$.

\end{remark}

Bourgain has proven \cite{B1} a mass concentration property for solutions to cubic NLS posed in $L^{2}(\mathbb{R}^{2})$ with a finite lifespan ($T^{*}<\infty$).
We adapt Bourgain's proof to (1.3), and obtain the following result:

\begin{theorem}
Let $u$ be a solution of $(1.3)$ which satisfies $T^{*}<\infty$.  Take $M:=\|u_{0}\|_{2}$.  Then there exist constants $\eta(M),C>0$  such that
\begin{align}
\limsup_{t\uparrow T^{*}} \sup_{\substack{\text{squares}\  Q\subset \mathbb{R}^{2} \\
\ \text{of sidelength} \ \ell(Q)<C(T^{*}-t)^{1/2}}} \Big( \int_{Q}|u(t,x)|^{2}dx\Big)^{1/2} > \eta(M)>0.
\end{align}
\end{theorem}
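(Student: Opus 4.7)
The plan is to adapt Bourgain's mass-concentration argument for $L^{2}$-critical cubic NLS in $\mathbb{R}^{2}$ \cite{B1} to (1.3), using that the Fourier multiplier $\mathcal{B}$ has a smooth, degree-zero homogeneous symbol and hence, by Mihlin, is bounded on $L^{p}(\mathbb{R}^{2})$ for every $1<p<\infty$. Since $T^{*}<\infty$, Theorem 1(a) yields $\|u\|_{L^{4}([0,T^{*})\times\mathbb{R}^{2})}=\infty$. Fix a small constant $\eta_{0}=\eta_{0}(M)$ to be chosen, and partition $[0,T^{*})$ into consecutive intervals $I_{n}=[t_{n},t_{n+1}]$ on which $\|u\|_{L^{4}(I_{n}\times\mathbb{R}^{2})}=\eta_{0}$; infinitely many such intervals accumulate at $T^{*}$, so $t_{n}\uparrow T^{*}$ and in particular $|I_{n}|\le T^{*}-t_{n}$.

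On each $I_{n}$, Duhamel's formula combined with the dual Strichartz pair $(L^{4}_{t,x},L^{4/3}_{t,x})$ and the $L^{4/3}$-boundedness of $\mathcal{L}$ yields
\[
\|u\|_{L^{4}(I_{n})}\le C\|e^{i(t-t_{n})\Delta}u(t_{n})\|_{L^{4}(I_{n})}+C\|\mathcal{L}(|u|^{2})u\|_{L^{4/3}(I_{n})}\le C\|e^{i(t-t_{n})\Delta}u(t_{n})\|_{L^{4}(I_{n})}+C\|u\|_{L^{4}(I_{n})}^{3}.
\]
Choosing $\eta_{0}$ small enough to absorb the cubic term on the left gives the purely linear lower bound $\|e^{i(t-t_{n})\Delta}u(t_{n})\|_{L^{4}(I_{n})}\gtrsim\eta_{0}$, and this is the only input required for the remaining analysis, in which $\mathcal{B}$ no longer appears.

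The next step invokes the Bourgain--Moyua--Vargas--Vega refined Strichartz inequality on $\mathbb{R}^{1+2}$: from the $L^{4}$ lower bound and the conserved mass $\|u(t_{n})\|_{L^{2}}=M$, one extracts a dyadic frequency cube $Q_{n}$ of sidelength $N_{n}$ with $\|P_{Q_{n}}u(t_{n})\|_{L^{2}}\ge\eta_{1}(M)>0$, and Strichartz scaling forces $|I_{n}|N_{n}^{2}\gtrsim 1$, so $N_{n}^{-1}\le C|I_{n}|^{1/2}\le C(T^{*}-t_{n})^{1/2}$. A Galilean boost---valid for (1.3) because $\mathcal{B}$ commutes with spatial translations while $|u|^{2}$ kills the Galilean phase factor, so $\mathcal{B}(|u|^{2})u$ is equivariant---recentres $Q_{n}$ at the origin. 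Partitioning $\mathbb{R}^{2}$ by squares of sidelength $\sim 1/N_{n}$ and applying Plancherel with pigeonholing on the (near-)bandlimited piece $P_{Q_{n}}u(t_{n})$ locates a physical square $Q_{n}'$ of sidelength $\le C(T^{*}-t_{n})^{1/2}$ on which $\|u(t_{n})\|_{L^{2}(Q_{n}')}\ge\eta(M)>0$, giving (1.11) along the sequence $t_{n}\uparrow T^{*}$.

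The main obstacle will be this concentration step: Bourgain's refined Strichartz and the subsequent passage from frequency concentration to a physical square with a positive share of the $L^{2}$ mass are machinery formulated for the free propagator $e^{it\Delta}$ on $\mathbb{R}^{1+2}$, and one must verify they apply here. The nonlinearity is absorbed cleanly via smallness of $\eta_{0}$ so the whole downstream analysis only involves the linear flow; the two structural checks---that $\mathcal{L}$ preserves $L^{p}$ for $1<p<\infty$ and that Galilean invariance survives the addition of $\gamma\mathcal{B}(|u|^{2})u$---are both immediate from the structure of $\mathcal{B}$, so Bourgain's argument should transfer essentially verbatim.
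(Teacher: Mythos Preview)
Your overall strategy---adapt Bourgain's $L^{2}$-critical NLS argument, with the only DS-specific input being boundedness of $\mathcal{L}$ on $L^{p}$---is exactly what the paper does, and your Duhamel/absorption step matches the paper's (4.13)--(4.14). The paper uses the $L^{2}$-bound (3.20) on $\mathcal{L}$ together with H\"older rather than Mihlin on $L^{4/3}$, but this is cosmetic.

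The gap is in the concentration step, which you correctly flag as the main obstacle but then sketch incorrectly. The refined Strichartz inequality (the paper's Squares Lemma, Lemma~4) does produce a frequency square $\tau$ of sidelength $\ell$ with $\|P_{\tau}u(t_{n})\|_{2}\gtrsim \eta_{1}$, but nothing in that lemma relates $\ell$ to $|I_{n}|$: the phrase ``Strichartz scaling forces $|I_{n}|N_{n}^{2}\gtrsim 1$'' is not a theorem. Equally, ``pigeonholing over physical squares of sidelength $\sim 1/N_{n}$'' cannot work as stated---there are infinitely many such squares, and a bandlimited function with fixed $L^{2}$ mass can be spread arbitrarily thinly over them. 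The paper closes this gap with a second ingredient, the Tubes Lemma (Lemma~5): the linear evolution $e^{it\Delta}f_{r}$ is, up to small $L^{4}$ error, supported on \emph{finitely many} space-time tubes of spatial cross-section $1/\ell$ and temporal extent $1/\ell^{2}$. One pigeonholes over these finitely many tubes to find one carrying a fixed fraction of the $L^{4}$ mass on $I_{j}$, and then---this is the step your sketch omits entirely---splits the time integral at $t_{j+1}-\delta\eta_{1}^{2}/\ell^{2}$ and uses the Bernstein bound $\|P_{\tau}u(t)\|_{L^{\infty}_{x}}\lesssim \ell$ to kill the contribution near $t_{j+1}$. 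This is what produces a time $t<T^{*}-c/\ell^{2}$ and a square $E$ of sidelength $1/\ell<C(T^{*}-t)^{1/2}$ with $\int_{E}|P_{\tau}u(t)|^{2}\gtrsim \eta_{1}^{2}$; a final convolution estimate passes from $P_{\tau}u$ to $u$. Your Galilean-invariance observation is correct for (1.3) and could be used to center $\tau$ at the origin, but it does not bypass the Tubes Lemma or the time-splitting argument.
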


\begin{remark}
Refinements of Theorem 7 relating the window size of mass concentration and rate of explosion of the $L^{4}_{[0,t]\times \mathbb{R}^{2}}$ norm also hold \cite{CR}.  That is, the window of mass concentration will have sidelength $(T^{*}-t)^{\frac{1+\beta}{2}}$ if and only if the $L^{4}_{[0,t]\times \mathbb{R}^{2}}$ norm grows in time no slower than $(T^{*}-t)^{-\beta}$.
\end{remark}
In Section 2 we use Theorem 5 to prove the mass concentration result for $H^{1}(\mathbb{R}^{2})$ solutions (Theorem 6). The proof of Theorem 5 is presented in Section 3.  Finally, the mass concentration result for $L^{2}(\mathbb{R}^{2})$ solutions (Theorem 7) is proven in Section 4.

\section{Mass concentration for $H^{1}$-solutions}
\noindent $\textit{Proof of Theorem 6.}$
Assume that $H^{1}(\mathbb{R}^{2})\ni u_{0}\mapsto u(t)$ is any finite time blow-up solution to (1.1).  We introduce a shrinking parameter which encodes the core size of the blow-up region:
\begin{align*} \rho (t) = \frac{1}{\|\nabla
u(t)\|_{2}},\end{align*}
and rescale
\begin{align}  v(t,x)=\rho u(t,\rho x).\end{align}
Let
$\{t_{n}\}_{n=1}^{\infty}$ be an arbitrary sequence such that
$t_{n}\uparrow T^{*}, \rho_{n}=\rho(t_{n})$ and $v_{n}=v(t_{n},\cdot)
$.  The mass of $u$ is invariant under both the rescaling (2.1), and the flow of (1.1), and therefore
\begin{align*}
\|v_{n}\|_{2}=\|u(t_{n})\|_{2}=\|u_{0}\|_{2}.
\end{align*}
We then compute, by choice of $\rho$, that
\begin{align*}
\|\nabla v_{n}\|_{2}=1.
\end{align*}
From linearity of $\mathcal{L}$, conservation of
energy and blow-up, we have
\begin{align*}
E(v_{n}) &= \frac{1}{2}-\frac{1}{4}\int \mathcal{L}(|v_{n}|^{2})|v_{n}|^{2}dx    \\
&= \frac{1}{2}\int |\nabla v_{n}|^{2}dx-\frac{1}{4}\int \mathcal{L}(|v_{n}|^{2})|v_{n}|^{2}dx    \\
&= \frac{1}{2}\int |\nabla (\rho_{n}u(t_{n},\rho_{n}x))|^{2}dx-\frac{1}{4}\int \mathcal{L}(|\rho_{n}u(t_{n},\rho_{n}x)|^{2})|\rho_{n}u(t_{n},\rho_{n}x)|^{2}dx    \\
&= \rho_{n}^{4}\Big(\frac{1}{2}\int|(\nabla u)(t_{n},\rho_{n}x)|^{2}dx-\frac{1}{4}\int \mathcal{L}(|u(t_{n})|^{2})(\rho_{n}x)|u(t_{n},\rho_{n}x)|^{2}dx \Big)   \\
&= \rho_{n}^{2}E(u(t_{n}))   \\
&= \frac{1}{\|\nabla
u(t_{n})\|^{2}_{2}}E(u_{0}) \rightarrow 0, \ \text{ as }n\rightarrow
\infty.
\end{align*}
In particular,
\begin{align*}
\lim_{n\rightarrow \infty} \int
\mathcal{L}(|v_{n}|^{2})|v_{n}|^{2}dx = 2.
\end{align*}
The sequence $\{v_{n}\}$ satisfies the hypotheses (1.10) of Theorem 5 with
\begin{align*}
m^4=2, \ \ M=1.
\end{align*}
Let us assume Theorem 5 holds true (we prove Theorem 5 in the next section).  By Theorem 5 there is a sequence $\{x_{n}\}_{n=1}^{\infty} \subset \mathbb{R}^{2}$
and a profile $V\in H^{1}(\mathbb{R}^{2})$, with $\|V\|_{2}\geq
\frac{m^{2}}{M\sqrt{C_{opt}}}=\sqrt{\frac{2}{C_{opt}}}$, such that, up to a subsequence
\begin{align*}
v_{n}=\rho_{n}u(t_{n},\rho_{n}\cdot +x_{n})\rightharpoonup V \text{
weakly in }H^{1}(\mathbb{R}^{2}).
\end{align*}
Then for any $A>0$, $\chi_{|x|\leq A}\rho_{n}u(t_{n},\rho_{n}\cdot +x_{n}) \rightharpoonup \chi_{|x|\leq A} V$ weakly in $L^{2}(\mathbb{R}^{2})$.  By lower semi-continuity of the norm in the weak limit,
\begin{align*}
\liminf_{n\rightarrow \infty}\int_{|x|\leq
A}\rho_{n}^{2}|u(t_{n},\rho_{n}x+x_{n})|^{2}dx \geq \int_{|x|\leq
A}|V|^{2}dx.
\end{align*}
With a change of variables we have
\begin{align*}
\liminf_{n\rightarrow \infty}\int_{|x-x_{n}|\leq \rho_{n}A}|u(t_{n},x)|^{2}dx  \geq \int_{|x|\leq
A}|V|^{2}dx.
\end{align*}
From the assumption of Theorem 6, that $\lambda(t)\|\nabla u(t)\|_{2}\rightarrow \infty$ as $t\uparrow T^{*}$, $$\frac{\lambda(t_{n})}{\rho_{n}} = \lambda(t_{n})\|\nabla u(t_{n})\|_{2} \rightarrow \infty.$$  Thus $\lambda(t_{n})\geq \rho_{n}A$ for $n$ sufficiently large, and
\begin{align*}
\liminf_{n\rightarrow \infty}\sup_{y\in
\mathbb{R}^{2}}\int_{|x-y|\leq \lambda(t_{n})}|u(t_{n},x)|^{2}dx
&\geq
\liminf_{n\rightarrow \infty}\sup_{y\in
\mathbb{R}^{2}}\int_{|x-y|\leq \rho_{n}A}|u(t_{n},x)|^{2}dx  \\
&\geq
\liminf_{n\rightarrow \infty}\int_{|x-x_{n}|\leq \rho_{n}A}|u(t_{n},x)|^{2}dx  \\
&\geq  \int_{|x|\leq A}|V|^{2}dx \ \ \text{  for every }A>0.
\end{align*}
Taking $A$ to infinity,
\begin{align*} \liminf_{n\rightarrow \infty}\sup_{y\in
\mathbb{R}^{2}}\int_{|x-y|\leq \lambda(t_{n})}|u(t_{n},x)|^{2}dx
\geq  \|V\|_{2}^{2} \geq \frac{2}{C_{opt}}.
\end{align*}
Since the sequence $\{t_{n}\}_{n=1}^{\infty}$ is arbitrary,
\begin{align}\liminf_{t\uparrow T^{*}}\sup_{y\in
\mathbb{R}^{2}}\int_{|x-y|\leq \lambda(t)}|u(t,x)|^{2}dx \geq
\frac{2}{C_{opt}}.
\end{align}
For every $t\in[0,T^{*}),$ the function $y\mapsto \int_{|x-y|\leq
\lambda(t)}|u(t,x)|^{2}dx $ is continuous and vanishes at infinity.
The supremum in (2.2) is therefore a maximum and there exists $y(t)\in \mathbb{R}^{2}$ such that
\begin{align}
\sup_{y\in \mathbb{R}^{2}}\int_{|x-y|\leq \lambda(t)}|u(t,x)|^{2}dx
=\int_{|x-y(t)|\leq \lambda(t)}|u(t,x)|^{2}dx.
\end{align}
Combining (2.2) and (2.3),
\begin{align*}
\liminf_{t \uparrow T^{*}}\int_{|x-y(t)|\leq
\lambda(t)}|u(t,x)|^{2}dx \geq \frac{2}{C_{opt}}.
\end{align*}
This completes the proof of Theorem 6 under the assumption that Theorem 5 holds true.
\qed
\section{A Compactness Property}
The proof of Theorem 5 relies on a profile decomposition for
bounded sequences in $H^{1}(\mathbb{R}^{2})$.
\begin{proposition}\cite{HK}
Let $\{v_{n}\}_{n=1}^{\infty}$ be a bounded sequence in
$H^{1}(\mathbb{R}^{2})$.  Then there exists a subsequence of
$\{v_{n}\}_{n=1}^{\infty} ($still denoted
$\{v_{n}\}_{n=1}^{\infty})$, a family $\{x^{j}_{n}\}_{j=1}^{\infty}$
for each $n$, and a bounded sequence $\{V^{j}\}_{j=1}^{\infty}
\subset H^{1}(\mathbb{R}^{2})$, such that
\begin{enumerate}
\renewcommand{\labelenumi}{(\alph{enumi})}
\renewcommand{\labelenumii}{(\alph{enumii})}
\item $\forall \  i \neq j$,
\begin{align}|x^{i}_{n}-x^{j}_{n}|\rightarrow \infty \ \text{as}\  n\rightarrow \infty.
\end{align}
\item $\forall \ k\geq 1$ and $\forall \,x \in \mathbb{R}^{2}$
\begin{align} v_{n}(x)= \sum_{j=1}^{k}V^{j}(x-x_{n}^{j}) + v^{k}_{n}(x)
\end{align}
with
\begin{align}\limsup_{n \rightarrow \infty}\|v_{n}^{k}\|_{p}\rightarrow 0
\ \text{as}\  k\rightarrow \infty \ \text{for every}\  p\in (2,\infty).
\end{align}
\end{enumerate}
Moreover, as $n\rightarrow \infty$,
\begin{align}
\|v_{n}\|_{2}^{2}&= \sum_{j=1}^{k}\|V^{j}\|_{2}^{2} +
\|v_{n}^{k}\|_{2}^{2} + o(1), \\
\|\nabla v_{n}\|_{2}^{2}&= \sum_{j=1}^{k}\|\nabla V^{j}\|_{2}^{2} +
\|\nabla v_{n}^{k}\|_{2}^{2} + o(1).
\end{align}
Here $o(1)$ represents terms that go to zero as $n\rightarrow \infty$.
\end{proposition}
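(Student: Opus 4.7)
The plan is to build the profiles iteratively by extracting one ``bubble'' at a time, at each stage replacing the sequence by a remainder that is still bounded in $H^{1}(\mathbb{R}^{2})$ but whose concentration has been reduced in a quantitative sense. The translations $x_{n}^{j}$ come from the location of the bubble at step $j$, and the asymptotic orthogonality (3.1) follows from how the iteration is set up.

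The key analytic input I would use is a refined Sobolev inequality in dimension two: for every $p\in(2,\infty)$ there exists $C_{p}$ such that
\[
\|f\|_{L^{p}(\mathbb{R}^{2})}\leq C_{p}\,\|f\|_{H^{1}(\mathbb{R}^{2})}^{2/p}\,\|f\|_{\dot{B}^{-1}_{\infty,\infty}(\mathbb{R}^{2})}^{1-2/p}.
\]
I would prove this by a Littlewood--Paley argument: decompose $f=\sum_{j}\Delta_{j}f$, use Bernstein's inequality $\|\Delta_{j}f\|_{L^{\infty}}\lesssim 2^{j}\|\Delta_{j}f\|_{L^{2}}$ in $\mathbb{R}^{2}$, and interpolate the $\ell^{2}$ control coming from $\|f\|_{H^{1}}$ against the $\ell^{\infty}$ control coming from the Besov norm. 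The role of this inequality is to quantify the heuristic that \emph{if no translate of $v_{n}$ weakly retains any mass, then $\|v_{n}\|_{L^{p}}\to 0$}: indeed, the Besov norm above has an equivalent heat-semigroup expression $\sup_{t>0}t^{1/2}\|e^{t\Delta}f\|_{L^{\infty}}$, and any nonvanishing value of this supremum, by a pigeonhole on the scale $t_{n}$ and point $x_{n}$ realizing it, produces translations along which a nonzero weak $H^{1}$-limit is extracted.

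Given this, I would run the iteration: set $v_{n}^{(0)}=v_{n}$, $M=\sup_{n}\|v_{n}\|_{H^{1}}$, and at step $j$ let $\gamma_{j}=\limsup_{n}\|v_{n}^{(j-1)}\|_{\dot{B}^{-1}_{\infty,\infty}}$. If $\gamma_{j}=0$, stop; otherwise pass to a subsequence and pick $x_{n}^{j}$ so that $v_{n}^{(j-1)}(\cdot+x_{n}^{j})\rightharpoonup V^{j}$ in $H^{1}$ with $\|V^{j}\|_{H^{1}}\gtrsim \gamma_{j}$, and define $v_{n}^{(j)}(x):=v_{n}^{(j-1)}(x)-V^{j}(x-x_{n}^{j})$. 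The standard Brezis--Lieb/weak-convergence identities give
\[
\|v_{n}^{(j-1)}\|_{2}^{2}=\|V^{j}\|_{2}^{2}+\|v_{n}^{(j)}\|_{2}^{2}+o(1),\qquad \|\nabla v_{n}^{(j-1)}\|_{2}^{2}=\|\nabla V^{j}\|_{2}^{2}+\|\nabla v_{n}^{(j)}\|_{2}^{2}+o(1),
\]
which telescope to (3.4) and (3.5). Since $\sum_{j}\|V^{j}\|_{H^{1}}^{2}\leq M^{2}+o(1)$, the profiles satisfy $\|V^{j}\|_{H^{1}}\to 0$ and hence $\gamma_{j}\to 0$. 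Feeding this back into the refined Sobolev inequality,
\[
\limsup_{n}\|v_{n}^{(k)}\|_{L^{p}}\leq C_{p}\,M^{2/p}\,\gamma_{k+1}^{\,1-2/p}\xrightarrow{k\to\infty}0
\]
for one fixed $p\in(2,\infty)$, and then for every such $p$ by H\"older interpolation against the uniform $H^{1}$ bound on the remainders.

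For the orthogonality (3.1), I would argue by contradiction: if along a subsequence $x_{n}^{i}-x_{n}^{j}\to x_{0}\in\mathbb{R}^{2}$ for some $i<j$, then translating the identity $v_{n}^{(j-1)}=v_{n}^{(i-1)}-\sum_{\ell=i}^{j-1}V^{\ell}(\cdot-x_{n}^{\ell})$ by $x_{n}^{j}$ and taking weak limits, the term $V^{i}(\cdot+x_{0})$ appears inside the limit of $v_{n}^{(j-1)}(\cdot+x_{n}^{j})$; combined with the weak convergences already recorded, this forces $V^{j}=0$, contradicting $\|V^{j}\|_{H^{1}}\gtrsim\gamma_{j}>0$. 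I expect the main obstacle to be the refined Sobolev inequality and the careful extraction of translates from a nonvanishing Besov norm; once that mechanism is in place, the iteration, the Pythagorean decompositions, and the orthogonality are essentially algebraic consequences of weak convergence.
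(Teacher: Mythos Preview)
Your iterative skeleton---extract a bubble, peel it off, record the Pythagorean identities, argue orthogonality of the cores by contradiction---is exactly the structure of the paper's proof (which follows Hmidi--Keraani), and those parts of your argument are fine.

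The gap is in your choice of concentration function. The homogeneous norm $\|\cdot\|_{\dot B^{-1}_{\infty,\infty}}$ is invariant under the $L^{2}$-preserving dilation $f\mapsto \lambda f(\lambda\,\cdot)$ in $\mathbb{R}^{2}$, so it cannot tell a unit-scale bubble from one that has spread out. Concretely, set $v_{n}(x)=n^{-1}\phi(x/n)$ for a fixed Schwartz bump $\phi$: this sequence is bounded in $H^{1}$, has $\|v_{n}\|_{\dot B^{-1}_{\infty,\infty}}\geq c>0$ for all $n$ (evaluate $t^{1/2}\|e^{t\Delta}v_{n}\|_{L^{\infty}}$ at $t=n^{2}$), and yet \emph{every} translate $v_{n}(\cdot+x_{n})$ converges weakly to $0$ in $H^{1}$. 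Hence your extraction claim ``$\gamma_{j}>0\Rightarrow$ there exist $x_{n}^{j}$ with $v_{n}^{(j-1)}(\cdot+x_{n}^{j})\rightharpoonup V^{j}$ and $\|V^{j}\|_{H^{1}}\gtrsim\gamma_{j}$'' is false, and the iteration stalls: subtracting translational profiles can never force $\gamma_{j}\to 0$. (The same scaling test, applied to $n\phi(n\,\cdot)$, shows your refined inequality as stated also fails for $p>4$.) The Besov-norm mechanism you describe is the right one for \emph{scale-invariant} profile decompositions, where dilations are part of the symmetry group; here only translations are allowed.

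The paper sidesteps this by taking the concentration function to be
\[
\eta(\{w_{n}\}):=\sup\bigl\{\|V\|_{H^{1}}:\ V\ \text{is a weak $H^{1}$-limit of some}\ w_{n}(\cdot+x_{n})\bigr\},
\]
so that the extraction step holds \emph{by definition}; the remaining analytic input, that $\eta(\{v_{n}^{k}\})\to 0$ forces $\limsup_{n}\|v_{n}^{k}\|_{L^{p}}\to 0$, is Lions' vanishing lemma (if $\sup_{y}\int_{B(y,1)}|w_{n}|^{2}\to 0$ and $\{w_{n}\}$ is bounded in $H^{1}$, then $\|w_{n}\|_{L^{p}}\to 0$ for every $p\in(2,\infty)$). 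Your argument is easily repaired by replacing $\|\cdot\|_{\dot B^{-1}_{\infty,\infty}}$ with either $\eta$ or the Lions quantity $\sup_{y}\|\cdot\|_{L^{2}(B(y,1))}$; everything else you wrote then goes through.
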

\begin{proof}(Outline, a complete proof is given in \cite[p. 2823]{HK}, Proposition 3.1.  See also \cite{MV} for related asymptotic compactness modulo symmetries results.)
For any sequence of functions $\textbf{u}=\{u_{n}\}_{n=1}^{\infty}\subset H^{1}(\mathbb{R}^{2})$, let $\mathcal{V}(\textbf{u})$ denote the set of weak limits of subsequences of $\{u_{n}(\cdot+x_{n})\}$ with $\{x_{n}\}_{n=1}^{\infty}\subset \mathbb{R}^{2}$.
Also take $\eta(\textbf{u}):= \sup \{\|V\|_{H^{1}(\mathbb{R}^{2})}:V\in \mathcal{V}(\textbf{u})\}$.

Let $\textbf{v}^{0}=\{v_{n}\}_{n=1}^{\infty}$ be a bounded sequence in $H^{1}(\mathbb{R}^{2})$.
The sequence of functions $\{V^{j}\}_{j=1}^{\infty}$, points $\{x^{j}_{n}\}_{n=1}^{\infty}$, and remainders
$\textbf{v}^{k}:=\{v_{n}^{k}\}_{n=1}^{\infty}$ will be constructed by induction on $k$.  That is, the $k^{\text{th}}$ function $V^{k}$ will be extracted from the set $\mathcal{V}(\textbf{v}^{k-1})$ of weak limits of translates of the remainders $\textbf{v}^{k-1}$.  The translation points emerging from this selection form the required sequence $\{x^{k}_{n}\}_{n=1}^{\infty}$.

If $\eta(\textbf{v}^{0})=0$ we can take $V^{j}\equiv 0$, $\forall \,j$.  Otherwise we choose $V^{1}\in \mathcal{V}(\textbf{v}^{0})$ such that $$ \|V^{1}\|_{H^{1}(\mathbb{R}^{2})} \geq \frac{1}{2}\eta(\textbf{v}^{0})>0.$$
By definition, there exists some sequence $\{x_{n}^{1}\}_{n=1}^{\infty}\subset \mathbb{R}^{2}$, such that up to a subsequence
\begin{align*} v_{n}(\cdot+x_{n}^{1})\rightharpoonup V^{1} \ \text{weakly in} \ H^{1}(\mathbb{R}^{2}).\end{align*}
We set
\begin{align*} v_{n}^{1}=v_{n}-V^{1}(\cdot-x_{n}^{1})\end{align*} for all $n\geq1$.
Then $v_{n}^{1}(\cdot+x^{1}_{n})\rightharpoonup 0$ weakly in $H^{1}(\mathbb{R}^{2})$, and this gives
\begin{align*}
\|v_{n}\|_{2}^{2} = \|v_{n}(\cdot+x_{n})\|_{2}^{2} &= \|v_{n}^{1}(\cdot+x_{n}) + V^{1}(\cdot)\|_{2}^{2}  \\
 &=  \|v_{n}^{1}(\cdot+x_{n})\|_{2}^{2} + 2Re\langle v_{n}^{1}(\cdot+x_{n}),V^{1}\rangle + \|V^{1}\|_{2}^{2} \\
 &=  \|v_{n}^{1}\|_{2}^{2} + \|V^{1}\|_{2}^{2} + o(1).
\end{align*}
This proves (3.4) for $k=1$, and (3.5) follows with the same reasoning.

We replace $\textbf{v}^{0}$ by $\textbf{v}^{1}=\{ v_{n}^{1}\}_{n=1}^{\infty}$, and repeat this process.
If $V^{2}\neq 0$, (3.1) holds for $i=1$, $j=2$.  Otherwise, up to a subsequence $x_{n}^{j}-x_{n}^{i}\rightarrow x_{0}\in \mathbb{R}^{2}$.  Writing $$ v_{n}^{1}(\cdot+x^{2}_{n})=v_{n}^{1}(\cdot+(x^{2}_{n}-x^{1}_{n})+x^{1}_{n}),$$then as $v_{n}^{1}(\cdot+x^{1}_{n})\rightharpoonup 0$ weakly in $H^{1}(\mathbb{R}^{2})$, and $v_{n}^{1}(\cdot+x^{2}_{n})\rightharpoonup V^{2}$ weakly in $H^{1}(\mathbb{R}^{2})$, we find $V^{2}=0$, a contradiction.  Proceeding inductively, we can extract the required sequence $\{V^{j}\}_{j=1}^{\infty}$ and family of sequences $\{x^{j}_{n}\}_{n=1}^{\infty}$.  Convergence of the series $\sum_{j=1}^{\infty}\|V^{j}\|_{H^{1}(\mathbb{R}^{2})}^{2}$ (by $(3.4),(3.5)$) implies that $\|V^{j}\|_{H^{1}(\mathbb{R}^{2})} \rightarrow 0$ as $j\rightarrow \infty$.  By construction we have $\eta(\textbf{v}^{k})\leq 2\|V^{k-1}\|_{H^{1}(\mathbb{R}^{2})}$, which gives us $\eta(\textbf{v}^{k})\rightarrow 0$ as $k\rightarrow \infty$.  The proof of (3.3), which relies on $\eta(\textbf{v}^{k})\rightarrow 0$ as $k\rightarrow \infty$, is omitted.
\end{proof}
We require an additional lemma describing a property of the profiles obtained in Proposition 1.
\begin{lemma}
Given bounded sequences $\{v_{n}\}_{n=1}^{\infty}, \{V_{j}\}_{j=1}^{\infty}\subset H^{1}(\mathbb{R}^{2})$ and a family of sequences $\{x_{n}^{j}\}_{j=1}^{\infty}\subset \mathbb{R}^{2}$ such that properties $(3.1)$-$(3.5)$ are satisfied, then
\begin{align}
\limsup_{n\rightarrow \infty}\int
\mathcal{L}(|v_{n}|^{2})|v_{n}|^{2}dx \leq \sum_{j=1}^{\infty}\int
\mathcal{L}(|V^{j}|^{2})|V^{j}|^{2}dx.
\end{align}
\end{lemma}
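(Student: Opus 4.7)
The plan is to use the profile decomposition (3.2) to write $v_n = w_n^k + v_n^k$, where $w_n^k := \sum_{j=1}^{k} V^j(\cdot-x_n^j)$, and expand the quartic functional $\int\mathcal{L}(|v_n|^2)|v_n|^2\,dx$. Since $|v_n|^2 = |w_n^k|^2 + 2\,\mathrm{Re}(w_n^k\overline{v_n^k}) + |v_n^k|^2$ (and similarly inside $\mathcal{L}$), the integrand decomposes into a ``principal'' piece $\int\mathcal{L}(|w_n^k|^2)|w_n^k|^2\,dx$ plus finitely many ``remainder'' terms $\int\mathcal{L}(A\bar B)C\bar D\,dx$ in which at least one of $A,B,C,D$ equals $v_n^k$. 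The goal is to show that the principal piece converges to $\sum_{j=1}^{k}\int\mathcal{L}(|V^j|^2)|V^j|^2\,dx$ as $n\to\infty$, that the remainders vanish in the iterated limit $\limsup_n$ then $k\to\infty$, and then to send $k\to\infty$.

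For the principal piece, write $\tilde V_n^j := V^j(\cdot-x_n^j)$ and expand
\begin{equation*}
\int\mathcal{L}(|w_n^k|^2)|w_n^k|^2\,dx = \sum_{i,j,l,m=1}^{k}\int\mathcal{L}\bigl(\tilde V_n^i\overline{\tilde V_n^j}\bigr)\,\tilde V_n^l\,\overline{\tilde V_n^m}\,dx.
\end{equation*}
The $k$ fully-diagonal terms $i=j=l=m$ are exactly $\int\mathcal{L}(|V^j|^2)|V^j|^2\,dx$ since $\mathcal{L}$ commutes with translations ($\mathcal{B}$ is a Fourier multiplier). For any non-diagonal tuple, at least one pair of indices $\alpha\neq\beta$ satisfies $|x_n^\alpha - x_n^\beta|\to\infty$ by (3.1); after recentering by $x_n^\alpha$, the integrand contains a product of the form $V^\alpha\cdot V^\beta(\cdot-z_n)$ with $z_n\to\infty$, appearing either as the argument of $\mathcal{L}$ or as one of the outer factors. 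Since $V^\alpha,V^\beta\in H^1(\mathbb{R}^2)\hookrightarrow L^q$ for every $q\in[2,\infty)$, approximating by compactly supported functions (whose translates eventually have disjoint support) shows that $V^\alpha V^\beta(\cdot-z_n)\to 0$ strongly in $L^p$ for every $p\in[1,\infty)$. Combined with $L^p$-boundedness of $\mathcal{L}$ for $1<p<\infty$ (from the bounded symbol of $\mathcal{B}$) and H\"older's inequality, each non-diagonal term tends to zero.

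For the remainder terms, H\"older and the $L^2$-boundedness of $\mathcal{L}$ yield
\begin{equation*}
\Bigl|\int\mathcal{L}(A\bar B)C\bar D\,dx\Bigr| \leq \|\mathcal{L}(A\bar B)\|_2\,\|CD\|_2 \lesssim \|A\|_4\|B\|_4\|C\|_4\|D\|_4.
\end{equation*}
The Sobolev embedding $H^1(\mathbb{R}^2)\hookrightarrow L^4$ keeps the factors from $v_n,w_n^k,V^j$ uniformly bounded in $L^4$, while (3.3) gives $\limsup_n\|v_n^k\|_4\to 0$ as $k\to\infty$. Hence every remainder term is $O(\|v_n^k\|_4)$ and disappears in the iterated limit. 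Combining everything,
\begin{equation*}
\limsup_{n\to\infty}\int\mathcal{L}(|v_n|^2)|v_n|^2\,dx = \sum_{j=1}^{k}\int\mathcal{L}(|V^j|^2)|V^j|^2\,dx + \varepsilon_k, \qquad \varepsilon_k\to 0,
\end{equation*}
and sending $k\to\infty$ yields (3.6).

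\textbf{Main obstacle.} The delicate step is the non-diagonal analysis inside the principal piece. Because $\mathcal{L}$ is \emph{non-local}, one cannot invoke disjoint-support cancellation at fixed $n$; the argument must instead exploit the $L^p$-decay of products of asymptotically separated translates together with the $L^p$-boundedness of $\mathcal{B}$. The scheme must be verified for every index pattern $(i,j,l,m)$---notably patterns like $(1,2,1,2)$ in which the problematic product sits \emph{inside} the nonlocal operator $\mathcal{L}$, so the strong $L^p$ decay of $V^1 V^2(\cdot - z_n)$ (rather than mere weak convergence) is what makes the argument close.
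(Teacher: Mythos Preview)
Your proposal is correct and follows essentially the same approach as the paper: the paper also writes $v_n = W_n^k + v_n^k$, bounds the remainder $R_n^k$ via H\"older and the $L^2$-boundedness of $\mathcal{L}$ (the paper's (3.20)) to extract a factor of $\|v_n^k\|_4$, and shows the off-diagonal cross terms $C_n^k$ vanish by the same asymptotic-separation mechanism you describe (formalized there as Lemmas~2 and~3). The only cosmetic differences are that the paper uses only $L^2$-boundedness of $\mathcal{L}$ rather than general $L^p$, and it organizes the cross terms into three explicit cases $(i_1\neq i_2)$, $(i_1=i_2,\,i_3\neq i_4)$, $(i_1=i_2\neq i_3=i_4)$---the last of which is handled by pairing $\mathcal{L}(|V^{i_1}|^2)\in L^2$ against $|V^{i_3}(\cdot-z_n)|^2$, a pattern your sketch should make explicit since it does not literally contain a factor $V^\alpha V^\beta(\cdot-z_n)$.
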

\begin{remark}
(3.6) is a modification of an inequality from \cite[p. 2825-2826]{HK}, (3.29)-(3.31), for the NLS case (1.9).  It is designed to be used with Theorem 3 to adapt the arguments of \cite{HK} to the system (1.1).
When we prove Lemma 1, and when we apply Theorem 3, we invoke properties of (1.1) distinct from (1.9).  Otherwise, our arguments are identical to those from \cite{HK}.
\end{remark}
The proof of Lemma 1 is postponed to the end of this section.
\vspace{0.2in}

\noindent $\textit{Proof of Theorem 5.}$
Given a bounded sequence $\{v_{n}\}_{n=1}^{\infty}\subset H^{1}(\mathbb{R}^{2})$ satisfying (1.10), we apply Proposition 1 to get a sequence of functions $\{V_{j}\}_{j=1}^{\infty}\subset H^{1}(\mathbb{R}^{2})$, and a sequence of points $\{x_{n}^{j}\}_{j=1}^{\infty}\subset \mathbb{R}^{2}$ for each $n$, which satisfy properties (3.1)-(3.5).
By (1.10), Lemma 1 and Theorem 3, we find
\begin{align}
m^{4} &\leq \limsup_{n\rightarrow \infty}\int
\mathcal{L}(|v_{n}|^{2})|v_{n}|^{2}dx \notag \\
&\leq \sum_{j=1}^{\infty}\int \mathcal{L}(|V^{j}|^{2})|V^{j}|^{2}dx \notag
\\
&\leq C_{opt}\sum_{j=1}^{\infty}\|
V^{j}\|^{2}_{2}\|\nabla V^{j}\|^{2}_{2}\notag
\\
&\leq C_{opt}\Big(\sup_{j\geq 1}
\|V^{j}\|_{2}^{2}\Big)\sum_{j=1}^{\infty}\|\nabla V^{j}\|^{2}_{2}.
\end{align}
Then by (3.5) and (1.10),
\begin{align}
\sum_{j=1}^{\infty}\|\nabla V^{j}\|^{2}_{2} \leq
\limsup_{n\rightarrow \infty}\|\nabla v_{n}\|^{2}_{2}  \leq M^{2}.
\end{align}
Combining (3.7) and (3.8), we have
\begin{align}
\sup_{j\geq 1} \|V^{j}\|_{2}^{2} \geq
\frac{m^{4}}{M^{2}C_{opt}}.
\end{align}
By (3.4) the series $\sum_{j=1}^{\infty}\|V^{j}\|^{2}_{2}$ converges,
$\lim_{j\rightarrow \infty}\|V^{j}\|_{2}=0$, and the supremum in (3.9) is
attained.  That is, there exists some $j_{0}$ such that
\begin{align}
\|V^{j_{0}}\|_{2} \geq \frac{m^{2}}{M\sqrt{C_{opt}}}.
\end{align}
With a change of variables, for any $k\geq j_{0}$
\begin{align}
v_{n}(x+x_{n}^{j_{0}}) = V^{j_{0}}(x)+\sum_{1\leq j \leq k, j\neq
j_{0}}V^{j}(x+x^{j_{0}}_{n}-x^{j}_{n}) + \widetilde{v}_{n}^{k}(x),
\end{align}
where $\widetilde{v}^{k}_{n}=v^{k}_{n}(x+x^{j_{0}}_{n})$.
Then by (3.1), for $j\neq j_{0}$
\begin{align}
V^{j}(\cdot + x^{j_{0}}_{n} - x^{j}_{n})\rightharpoonup 0 \ \
\text{weakly in}\  H^{1}(\mathbb{R}^{2}) \ \text{as}\  n\rightarrow \infty.
\end{align}
Up to a subsequence, we can assume that
\begin{align}\tilde{v}_{n}^{k}\rightharpoonup \tilde{v}^{k} \ \ \text{weakly in}\  H^{1}(\mathbb{R}^{2}) \ \text{as}\  n\rightarrow \infty
\end{align}
for some  $\tilde{v}^{k}\in H^{1}(\mathbb{R}^{2})$, and all $k\geq 0$.
We justify this assumption.  Fixing $k=1, \{\widetilde{v}_{n}^{1}\}_{n=1}^{\infty}$ is bounded in
$H^{1}(\mathbb{R}^{2})$. By extracting a subsequence
$\{\widetilde{v}_{n^{1}_{i}}^{1}\}_{i=1}^{\infty}$, we can ensure
$\widetilde{v}_{n^{1}_{i}}^{1} \rightharpoonup \widetilde{v}^{1}$ as
$i\rightarrow \infty$.  For $k=2,
\{\widetilde{v}_{n^{1}_{i}}^{2}\}_{i=1}^{\infty}$ is bounded in
$H^{1}(\mathbb{R}^{2})$, so once again extracting a subsequence
$\{n^{2}_{i}\}\subset \{n^{1}_{i}\}$, we can ensure
$\widetilde{v}_{n^{2}_{i}}^{2} \rightharpoonup \widetilde{v}^{2}$ as
$i\rightarrow \infty$.  Proceeding this way,
such that $\widetilde{v}_{n^{k}_{i}}^{k}
\rightharpoonup \widetilde{v}^{k}$ as $i\rightarrow \infty$  for each $k\geq1$,
we can
extract the diagonal subsequence
$\{\widetilde{v}_{n^{i}_{i}}^{k}\}_{i=1}^{\infty}$ which satisfies
 $\widetilde{v}_{n^{i}_{i}}^{k}
\rightharpoonup \widetilde{v}^{k}$ as $i\rightarrow \infty$ for all
$k\geq1$, since $\{n^{i}_{i}\}\subset \{n^{k}_{i}\}$ for $i$ sufficiently large.  Rename this subsequence
$\{\widetilde{v}_{n}^{k}\}_{n=1}^{\infty}$, and (3.13) is justified.
\\ \indent
By (3.11), (3.12) and (3.13),
\begin{align*}
v_{n}(x+x_{n}^{j_{0}}) \rightharpoonup V^{j_{0}} + \widetilde{v}^{k}
\ \ \text{weakly in} \ H^{1}(\mathbb{R}^{2}) \ \text{as}\  n\rightarrow \infty.
\end{align*}
The sequence $v_{n}(x+x_{n}^{j_{0}})$ is independent of $k$, and hence the
weak limit $V^{j_{0}} + \widetilde{v}^{k}$ is also independent of $k$.  Therefore, for all $k\geq j_{0}$,
$\widetilde{v}^{k}=\widetilde{v}^{j_{0}}$ for some
$\widetilde{v}^{j_{0}}\in H^{1}(\mathbb{R}^{2})$.  By
lower semi-continuity of the $L^{4}(\mathbb{R}^{2})$ norm in the weak limit,
\begin{align*}
\|\widetilde{v}^{j_{0}}\|_{4} \leq \limsup_{n\rightarrow
\infty}\|\widetilde{v}^{k}_{n}\|_{4}= \limsup_{n\rightarrow
\infty}\|v^{k}_{n}\|_{4}\rightarrow 0 \ \ \text{as}\ k\rightarrow
\infty, \ \ \text{by (3.3)}.
\end{align*}
Thus $\widetilde{v}^{j_{0}}=0$, and,
\begin{align}
v_{n}(x+x_{n}^{j_{0}}) \rightharpoonup V^{j_{0}} \ \ \text{weakly in}\  H^{1}(\mathbb{R}^{2}) \ \text{as}\  n\rightarrow \infty.
\end{align}
By (3.10) and (3.14), the function $V^{j_{0}}$ and the sequence
$\{x_{n}^{j_{0}}\}_{n=1}^{\infty}$ satisfy the claims of Theorem 5.
This completes the proof of Theorem 5 under the assumption that Lemma 1 holds true.
\qed

The proof of Lemma 1 requires two elementary results.
\begin{lemma}
Suppose $\phi,\psi \in H^{1}(\mathbb{R}^{2})$, if
$|x_{n}^{1}-x_{n}^{2}|\rightarrow \infty$ as $n\rightarrow \infty$,
then
\begin{align*}&\|\phi(\cdot-x^{1}_{n})\psi(\cdot-x^{2}_{n})\|_{2}\rightarrow
0 \ \ \text{as} \ n\rightarrow \infty.
\end{align*}
\end{lemma}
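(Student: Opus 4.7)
The plan is to reduce the claim to a standard translation-escape argument and handle the lack of compact support by density. First, a change of variables gives
\begin{align*}
\|\phi(\cdot-x_n^1)\psi(\cdot-x_n^2)\|_2^2 = \int |\phi(y)|^2 |\psi(y+z_n)|^2\,dy,
\end{align*}
where $z_n := x_n^1-x_n^2$ satisfies $|z_n|\to\infty$. So it suffices to show this last integral tends to $0$.

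If $\phi$ and $\psi$ were compactly supported, then for $n$ sufficiently large the supports of $\phi(\cdot)$ and $\psi(\cdot+z_n)$ would be disjoint and the integral would vanish identically. The general case will follow by approximation. Given $\varepsilon>0$, use density of $C_c^\infty(\mathbb{R}^2)$ in $H^1(\mathbb{R}^2)$ and the Sobolev embedding $H^1(\mathbb{R}^2)\hookrightarrow L^4(\mathbb{R}^2)$ to pick $\phi_\varepsilon,\psi_\varepsilon\in C_c^\infty(\mathbb{R}^2)$ with $\|\phi-\phi_\varepsilon\|_4<\varepsilon$ and $\|\psi-\psi_\varepsilon\|_4<\varepsilon$. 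Decompose
\begin{align*}
\phi(\cdot-x_n^1)\psi(\cdot-x_n^2) &= (\phi-\phi_\varepsilon)(\cdot-x_n^1)\psi(\cdot-x_n^2) \\
&\quad + \phi_\varepsilon(\cdot-x_n^1)(\psi-\psi_\varepsilon)(\cdot-x_n^2) + \phi_\varepsilon(\cdot-x_n^1)\psi_\varepsilon(\cdot-x_n^2),
\end{align*}
and apply Cauchy--Schwarz (i.e.\ H\"older with $\tfrac14+\tfrac14=\tfrac12$) together with translation invariance of $L^4$ to bound the $L^2$ norm of the first two terms by $\varepsilon\|\psi\|_4 + \|\phi_\varepsilon\|_4\,\varepsilon \lesssim \varepsilon$. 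The third term vanishes for $n$ large since $\phi_\varepsilon,\psi_\varepsilon$ are compactly supported and $|z_n|\to\infty$. Taking $n\to\infty$ and then $\varepsilon\to 0$ completes the argument.

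There is no real obstacle here: the only subtlety is that $H^1$ functions are not compactly supported, which is dispatched by density plus the two-dimensional Sobolev embedding into $L^4$. One could alternatively argue via weak convergence $\psi(\cdot+z_n)\rightharpoonup 0$ in $L^2$ combined with the compactness of multiplication by a fixed $|\phi|^2\in L^1\cap L^2$, but the direct truncation argument above is the most transparent route.
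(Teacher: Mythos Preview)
Your proof is correct and follows essentially the same approach as the paper: Sobolev embedding $H^1(\mathbb{R}^2)\hookrightarrow L^4(\mathbb{R}^2)$, approximation by compactly supported functions, the same three-term decomposition $\phi\psi = (\phi-\phi_\varepsilon)\psi + \phi_\varepsilon(\psi-\psi_\varepsilon) + \phi_\varepsilon\psi_\varepsilon$, H\"older to control the first two pieces, and disjoint supports to kill the third. The only cosmetic differences are your preliminary change of variables (which you do not actually use in the estimate) and the slightly coarser choice of $\varepsilon$ in the approximation, neither of which affects the argument.
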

\begin{proof}
By Sobolev embedding $\phi,\psi \in L^{4}(\mathbb{R}^{2})$.  Given $\epsilon>0$, we use the density
of $C^{\infty}_{0}(\mathbb{R}^{2})$ in $L^{4}(\mathbb{R}^{2})$ to find $\phi_{0},\psi_{0}\in C^{\infty}_{0}(\mathbb{R}^{2})$ such
that $\|\phi-\phi_{0}\|_{4} < \frac{\epsilon}{2\|\psi\|_{4}}$, and $\|\psi-\psi_{0}\|_{4} < \frac{\epsilon}{2\|\phi_{0}\|_{4}}$.  Applying the triangle and H\"{o}lder inequalities
\begin{align}
\|\phi(\cdot-x_{n}^{1})\psi(\cdot-x_{n}^{2})\|_{2} &=
\|\phi_{0}(\cdot-x_{n}^{1})(\psi(\cdot-x_{n}^{2})-\psi_{0}(\cdot-x_{n}^{2})) \notag \\
& \ \ \ \ \ +
(\phi(\cdot-x_{n}^{1})-\phi_{0}(\cdot-x_{n}^{1}))\psi(\cdot-x_{n}^{2})
+ \phi_{0}(\cdot-x_{n}^{1})\psi_{0}(\cdot-x_{n}^{2})\|_{2} \notag \\
&\leq \|\phi_{0}\|_{4}\|\psi-\psi_{0}\|_{4} +
\|\psi\|_{4}\|\phi-\phi_{0}\|_{4} +
\|\phi_{0}(\cdot-x_{n}^{1})\psi_{0}(\cdot-x_{n}^{2})\|_{2} \notag \\
& < \frac{\epsilon}{2} + \frac{\epsilon}{2} + 0  \\
&= \epsilon. \notag
\end{align}
(3.15) follows from $\|\phi_{0}(\cdot-x_{n}^{1})\psi_{0}(\cdot-x_{n}^{2})\|_{2}=0$ for $n$ sufficiently large, which is due to $|x_{n}^{1}-x_{n}^{2}|\rightarrow \infty$ as $n\rightarrow \infty$, and $\phi_{0},\psi_{0}\in C^{\infty}_{0}(\mathbb{R}^{2})$.
\end{proof}
\begin{lemma}
Suppose $\phi\in L^{2}(\mathbb{R}^{2}), \psi \in
H^{1}(\mathbb{R}^{2})$, if $|x_{n}^{1}-x_{n}^{2}|\rightarrow \infty$
as $n\rightarrow \infty$, then
\begin{align*}\int|\phi(x-x^{1}_{n})||\psi(x-x^{2}_{n})|^{2}dx\rightarrow 0\ \ \text{as} \ n\rightarrow \infty.
\end{align*}
\end{lemma}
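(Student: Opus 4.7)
The plan is to adapt the density/smooth-truncation argument from Lemma 2, taking advantage of Sobolev embedding. Since $\psi\in H^{1}(\mathbb{R}^{2})\hookrightarrow L^{4}(\mathbb{R}^{2})$, we have $|\psi|^{2}\in L^{2}(\mathbb{R}^{2})$, so Cauchy--Schwarz already gives a uniform-in-$n$ bound of $\|\phi\|_{2}\|\psi\|_{4}^{2}$ on the integral. The divergence $|x_{n}^{1}-x_{n}^{2}|\to\infty$ will only become useful after $\phi$ and $\psi$ are replaced by compactly supported surrogates.

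Given $\epsilon>0$, I would fix $\phi_{0},\psi_{0}\in C^{\infty}_{0}(\mathbb{R}^{2})$ with $\|\phi-\phi_{0}\|_{2}$ and $\|\psi-\psi_{0}\|_{4}$ small. Writing $\phi=\phi_{0}+(\phi-\phi_{0})$ and $|\psi|^{2}=|\psi_{0}|^{2}+(|\psi|^{2}-|\psi_{0}|^{2})$, the triangle inequality splits the integral into three pieces: (i) the principal piece $\int|\phi_{0}(x-x_{n}^{1})||\psi_{0}(x-x_{n}^{2})|^{2}\,dx$, which vanishes identically for $n$ large because the compact supports of $\phi_{0}(\cdot-x_{n}^{1})$ and $\psi_{0}(\cdot-x_{n}^{2})$ are eventually separated by $|x_{n}^{1}-x_{n}^{2}|\to\infty$; (ii) a tail from $\phi-\phi_{0}$ dominated by $\|\phi-\phi_{0}\|_{2}\|\psi\|_{4}^{2}$ via Cauchy--Schwarz; and (iii) a cross term $\int|\phi_{0}(x-x_{n}^{1})|\,\bigl||\psi|^{2}-|\psi_{0}|^{2}\bigr|(x-x_{n}^{2})\,dx$.

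To bound (iii) I would invoke the elementary pointwise inequality $\bigl||\psi|^{2}-|\psi_{0}|^{2}\bigr|\leq(|\psi|+|\psi_{0}|)|\psi-\psi_{0}|$, then apply Cauchy--Schwarz in space followed by H\"{o}lder with $L^{4}\times L^{4}\hookrightarrow L^{2}$ to obtain a bound of the form $\|\phi_{0}\|_{2}\bigl(\|\psi\|_{4}+\|\psi_{0}\|_{4}\bigr)\|\psi-\psi_{0}\|_{4}$. Arranging $\phi_{0}$ and $\psi_{0}$ so that (ii) and (iii) are each at most $\epsilon/2$, and then taking $n$ large enough to annihilate (i), yields the claim. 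The only mildly delicate step, and the main obstacle compared to Lemma 2, is controlling $|\psi|^{2}-|\psi_{0}|^{2}$ in $L^{2}$ by an $L^{4}$ approximation of $\psi$; everything else follows the template of the previous lemma essentially verbatim.
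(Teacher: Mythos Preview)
Your proposal is correct and follows the paper's proof essentially verbatim: the paper also approximates $\phi$ in $L^{2}$ and $\psi$ in $L^{4}$ by $C^{\infty}_{0}$ functions, splits the integral into the same three pieces, and handles the cross term via $\bigl\||\psi|^{2}-|\psi_{0}|^{2}\bigr\|_{2}\leq\|\psi+\psi_{0}\|_{4}\|\psi-\psi_{0}\|_{4}$. The only bookkeeping point the paper makes explicit is to fix $\phi_{0}$ first (so that $\|\phi_{0}\|_{2}$ is known) and then choose $\|\psi-\psi_{0}\|_{4}$ small relative to it, which is implicit in your ``arranging $\phi_{0}$ and $\psi_{0}$'' step.
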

\begin{proof}
We choose $\phi_{0},\psi_{0}\in C^{\infty}_{0}(\mathbb{R}^{2})$ such
that $\|\phi-\phi_{0}\|_{2}<\frac{\epsilon}{2\|\psi\|_{4}^{2}}$, and
$\|\psi-\psi_{0}\|_{4}<\frac{\epsilon}{B}$, where $B$ will be
determined afterward. Then,
\begin{align*}
\int|\phi(x-x_{n}^{1})||\psi(x-x_{n}^{2})|^{2}dx &=
\int|(\phi(x-x_{n}^{1})-\phi_{0}(x-x_{n}^{1}))(\psi(x-x_{n}^{2}))^{2} \\
& \ \ \ \ \ \ \ \  + \phi_{0}(x-x_{n}^{1})((\psi(x-x_{n}^{2}))^{2}-(\psi_{0}(x-x_{n}^{2}))^{2})  \\
& \ \ \ \ \ \ \ \  + \phi_{0}(x-x_{n}^{1})(\psi_{0}(x-x_{n}^{2}))^{2}|dx \\
&\leq \|\psi^{2}\|_{2}\|\phi-\phi_{0}\|_{2} + \|\phi_{0}\|_{2}\|\psi^{2}-\psi_{0}^{2}\|_{2}  \\
& \ \ \ \ \ \ \ \  +
\int|\phi_{0}(x-x_{n}^{1})||\psi_{0}(x-x_{n}^{2})|^{2}dx \\
&\leq \|\psi\|_{4}^{2}\|\phi-\phi_{0}\|_{2} + \|\phi_{0}\|_{2}\|\psi + \psi_{0}\|_{4}\|\psi-\psi_{0}\|_{4} \\
& \ \ \ \ \ \ \ \  +
\int|\phi_{0}(x-x_{n}^{1})||\psi_{0}(x-x_{n}^{2})|^{2}dx \\
&\leq
\|\psi\|_{4}^{2}\big(\frac{\epsilon}{2\|\psi\|_{4}^{2}}\big) +
\|\phi_{0}\|_{2}(2\|\psi\|_{4}+\|\psi-\psi_{0}\|_{4})\|\psi-\psi_{0}\|_{4} \\
& \ \ \ \ \ \ \ \  +
\int|\phi_{0}(x-x_{n}^{1})||\psi_{0}(x-x_{n}^{2})|^{2}dx \\
&<
\frac{\epsilon}{2} + \|\phi_{0}\|_{2}\big(2\|\psi\|_{4}+\frac{\epsilon}{B}\big)\frac{\epsilon}{B}
+ 0 \\
&\leq \epsilon
\end{align*}
for $n$ large, as long as
\begin{align*}
\frac{1}{2}B^{2} - 2\|\phi_{0}\|_{2}\|\psi\|_{4}B -
\epsilon\|\phi_{0}\|_{2} \geq 0
\end{align*}
which holds for $B$ sufficiently large.
\end{proof}
\noindent $\textit{Proof of Lemma 1.}$ Observe that the infinite sum on the right-hand side of (3.6) converges
by (3.7), (3.8) and boundedness of $\{v_{n}\}\subset H^{1}(\mathbb{R}^{2})$.  It therefore suffices to show that $\forall\,\epsilon>0$, $\exists \,K(\epsilon)>0$ such that for $k\geq K(\epsilon)$,
\begin{align*}
\limsup_{n\rightarrow \infty}\int
\mathcal{L}(|v_{n}|^{2})|v_{n}|^{2}dx \leq \sum_{j=1}^{k}\int
\mathcal{L}(|V^{j}|^{2})|V^{j}|^{2}dx + \epsilon.
\end{align*}
Letting $W^{k}_{n}:=\sum_{j=1}^{k}V^{j}(x-x^{j}_{n})$, we compute
\begin{align}
\int
\mathcal{L}(|v_{n}|^{2})|v_{n}|^{2}dx &= \int
\mathcal{L}(|W^{k}_{n}+v^{k}_{n}|^{2})|W^{k}_{n}+v^{k}_{n}|^{2}dx \notag \\
&= \int
\mathcal{L}(|W^{k}_{n}|^{2})|W^{k}_{n}|^{2}dx + R_{n}^{k}
\end{align}
where the remainder term is
\begin{align*}
R_{n}^{k} = \int
\mathcal{L}(2Re(W^{k}_{n}\overline{v^{k}_{n}})+|v^{k}_{n}|^{2})|v_{n}|^{2}dx
+ \int
\mathcal{L}(|W^{k}_{n}|^{2})(2Re(W^{k}_{n}\overline{v^{k}_{n}})+|v^{k}_{n}|^{2})dx.
\end{align*}
We also take
\begin{align}
\int
\mathcal{L}(|W^{k}_{n}|^{2})|W^{k}_{n}|^{2}dx = \sum_{j=1}^{k}\int
\mathcal{L}(|V^{j}|^{2})|V^{j}|^{2}dx + C_{n}^{k}
\end{align}
where the mixed cross term is
\begin{align*}
C_{n}^{k} = \sum_{\substack{1\leq i_{1},i_{2},i_{3},i_{4}\leq k \\
\ i_{m}\neq i_{j}\  \\ \text{for some}\ m\neq j}}\int
\mathcal{L}(V_{i_{1}}(x-x_{n}^{i_{1}})\overline{V}_{i_{2}}(x-x_{n}^{i_{2}}))V_{i_{3}}(x-x_{n}^{i_{3}})\overline{V}_{i_{4}}(x-x_{n}^{i_{4}})dx.
\end{align*}
We \textit{claim} that given $\epsilon>0, \exists\,K(\epsilon)>0$ such that
for $k\geq K(\epsilon)$
\begin{align}
\limsup_{n\rightarrow \infty}|R_{n}^{k}|\leq \epsilon \\
\limsup_{n\rightarrow \infty}|C_{n}^{k}|=0.
\end{align}
By Plancherel's theorem and the definition of the operator $\mathcal{B}$,
\begin{align}
\|\mathcal{L}(f)\|_{2}=\|\nu f+\gamma \mathcal{B}(f)\|_{2}\leq \|f\|_{2}+
\gamma\|\mathcal{B}(f)\|_{2} \notag &= \|f\|_{2}+
\gamma\|\widehat{\mathcal{B}(f)}\|_{2} \notag
\\ &= \|f\|_{2}+
\gamma\big(\int|\frac{\xi_{1}^{2}}{|\xi|^{2}}\hat{f}(\xi)|^{2}d\xi
\big)^{1/2} \notag \\ &\leq \|f\|_{2}+ \gamma\|\hat{f}\|_{2} \notag \\
&=(1+\gamma)\|f\|_{2}.
\end{align}
We then compute, with repeated use of H\"{o}lder and triangle inequalities,
\begin{align}
|R_{n}^{k}| &\leq \int
|\mathcal{L}(2Re(W^{k}_{n}\overline{v^{k}_{n}})+|v^{k}_{n}|^{2})|v_{n}|^{2}
\notag\\
&\ \ \ \ \ \ \ \ \ \ \ \ \ \ \ \ \ +
\mathcal{L}(|W^{k}_{n}|^{2})(2Re(W^{k}_{n}\overline{v^{k}_{n}})+|v^{k}_{n}|^{2})|dx
\notag\\
&\leq
\Big(\|\mathcal{L}(2Re(W^{k}_{n}\overline{v^{k}_{n}})+|v^{k}_{n}|^{2})\|_{2}\||v_{n}|^{2}\|_{2}
\notag\\
&\ \ \ \ \ \ \ \ \ \ \ \ \ \ \ \ \ +
\|\mathcal{L}(|W^{k}_{n}|^{2})\|_{2}\|2Re(W^{k}_{n}\overline{v^{k}_{n}})+|v^{k}_{n}|^{2}\|_{2}\Big)
\notag\\
&\leq
(1+\gamma)\|2Re(W^{k}_{n}\overline{v^{k}_{n}})+|v^{k}_{n}|^{2}\|_{2}\Big(\||v_{n}|^{2}\|_{2}
+ \||W^{k}_{n}|^{2}\|_{2}\Big) \ \ \ \text{by (3.20)},
\notag\\
&\leq
(1+\gamma)\Big(2\|W^{k}_{n}v^{k}_{n}\|_{2}+\|v^{k}_{n}\|^{2}_{4}\Big)\Big(\|v_{n}\|^{2}_{4}
+ \|W^{k}_{n}\|^{2}_{4}\Big)
\notag\\
&\leq
(1+\gamma)\|v^{k}_{n}\|_{4}\Big(2\|W^{k}_{n}\|_{4}+\|v^{k}_{n}\|_{4}\Big)\Big(\|v_{n}\|^{2}_{4}
+ \|W^{k}_{n}\|^{2}_{4}\Big)
\notag\\
&\leq
(1+\gamma)\|v^{k}_{n}\|_{4}\Big(2\|v_{n}\|_{4}+3\|v^{k}_{n}\|_{4}\Big)\Big(2\|v_{n}\|^{2}_{4}
+ 2\|v_{n}\|_{4}\|v^{k}_{n}\|_{4} + \|v^{k}_{n}\|_{4}^{2}\Big)
\\ &\leq (1+\gamma)\|v^{k}_{n}\|_{4}\Big(C_{1}+3\|v^{k}_{n}\|_{4}\Big)\Big(C_{2}
+ C_{3}\|v^{k}_{n}\|_{4} + \|v^{k}_{n}\|_{4}^{2}\Big),
\end{align}
where $C_{1},C_{2},C_{3}>0$ are constants.  Here we have obtained (3.21) by writing $W^{k}_{n}=v_{n}-v_{n}^{k}$, and (3.22) follows from boundedness of $\{v_{n}\}\subset H^{1}(\mathbb{R}^{2})$, since $\|v_{n}\|_{4}^{2}\leq
C\|v_{n}\|_{2}\|\nabla v_{n}\|_{2}<\widetilde{C}$, independent of
$n$.  By (3.3)
\begin{align*}
\limsup_{n\rightarrow \infty}|R_{n}^{k}|
&\leq\limsup_{n\rightarrow
\infty}(1+\gamma)\|v^{k}_{n}\|_{4}\Big(C_{1}+3\limsup_{n\rightarrow
\infty}\|v^{k}_{n}\|_{4}\Big) \\
&\ \ \ \ \ \ \ \ \ \cdot\Big(C_{2} + C_{3}\limsup_{n\rightarrow
\infty}\|v^{k}_{n}\|_{4} + \big(\limsup_{n\rightarrow
\infty}\|v^{k}_{n}\|_{4}\big)^{2}\Big) \\
&\leq \epsilon,
\end{align*}
for $k\geq K(\epsilon)$ sufficiently large.  Having
justified $(3.18)$, it remains to justify $(3.19)$.  For
$k\geq K(\epsilon)$
\begin{align*}
|C_{n}^{k}| &= |\sum_{\substack{1\leq i_{1},i_{2},i_{3},i_{4}\leq k \\
\ i_{m}\neq i_{j}\  \\ \text{for some}\ m\neq j}}\int
\mathcal{L}(V_{i_{1}}(x-x_{n}^{i_{1}})\overline{V}_{i_{2}}(x-x_{n}^{i_{2}}))V_{i_{3}}(x-x_{n}^{i_{3}})\overline{V}_{i_{4}}(x-x_{n}^{i_{4}})dx|
\\
&\leq \sum_{\substack{1\leq i_{1},i_{2},i_{3},i_{4}\leq k \\
\ i_{m}\neq i_{j}\  \\ \text{for some}\ m\neq j}}\int
|\mathcal{L}(V_{i_{1}}(x-x_{n}^{i_{1}})\overline{V}_{i_{2}}(x-x_{n}^{i_{2}}))V_{i_{3}}(x-x_{n}^{i_{3}})\overline{V}_{i_{4}}(x-x_{n}^{i_{4}})|dx.
\end{align*}
We organize the terms of this sum as follows
\begin{enumerate}
\renewcommand{\labelenumi}{(\alph{enumi})}
\renewcommand{\labelenumii}{(\alph{enumii})}
\renewcommand{\labelenumiii}{(\alph{enumiii})}
\item $i_{1}\neq i_{2}$
\item $i_{1}=i_{2},i_{3}\neq i_{4}$
\item $i_{1}=i_{2}\neq i_{3}=i_{4}$
\end{enumerate}
In case (a),
\begin{align*}
\limsup_{n\rightarrow \infty}&\int
|\mathcal{L}(V_{i_{1}}(x-x_{n}^{i_{1}})\overline{V}_{i_{2}}(x-x_{n}^{i_{2}}))V_{i_{3}}(x-x_{n}^{i_{3}})\overline{V}_{i_{4}}(x-x_{n}^{i_{4}})|dx
\\ &\leq
\limsup_{n\rightarrow
\infty}\|\mathcal{L}(V_{i_{1}}(\cdot-x_{n}^{i_{1}})\overline{V}_{i_{2}}(\cdot-x_{n}^{i_{2}}))\|_{2}
\|V_{i_{3}}(\cdot-x_{n}^{i_{3}})\overline{V}_{i_{4}}(\cdot-x_{n}^{i_{4}})\|_{2}
\\ &\leq
\limsup_{n\rightarrow
\infty}(1+\gamma)\|V_{i_{1}}(\cdot-x_{n}^{i_{1}})\overline{V}_{i_{2}}(\cdot-x_{n}^{i_{2}})\|_{2}
\|V_{i_{3}}\|_{4}\|V_{i_{4}}\|_{4}\ \ \ \ \text{by (3.20)}
\\
&= 0
\end{align*}
by Lemma 2.  Similarly, in case (b), by Lemma 2,
\begin{align*}
\limsup_{n\rightarrow \infty}&\int
|\mathcal{L}(V_{i_{1}}(x-x_{n}^{i_{1}})\overline{V}_{i_{2}}(x-x_{n}^{i_{2}}))V_{i_{3}}(x-x_{n}^{i_{3}})\overline{V}_{i_{4}}(x-x_{n}^{i_{4}})|dx
\\ &\leq
\limsup_{n\rightarrow
\infty}(1+\gamma)\|V_{i_{1}}\|_{4}^{2}\|V_{i_{3}}(\cdot-x_{n}^{i_{3}})\overline{V}_{i_{4}}(\cdot-x_{n}^{i_{4}})\|_{2}
\\
&= 0 .
\end{align*}
In case (c), $V_{i_{1}}\in H^{1}(\mathbb{R}^{2}) \subset L^{4}(\mathbb{R}^{2})$, and thus $|V_{i_{1}}|^{2}\in L^{2}(\mathbb{R}^{2})$.  By (3.20),
$\mathcal{L}(|V_{i_{1}}|^{2}) \in L^{2}(\mathbb{R}^{2})$.  Applying Lemma 3,
\begin{align*}\limsup_{n\rightarrow \infty}\int
|\mathcal{L}(|V_{i_{1}}(x-x_{n}^{i_{1}})|^{2})||V_{i_{3}}(x-x_{n}^{i_{3}})|^{2}dx
&= \limsup_{n\rightarrow \infty}\int
|\mathcal{L}(|V_{i_{1}}|^{2})(x-x_{n}^{i_{1}})||V_{i_{3}}(x-x_{n}^{i_{3}})|^{2}dx \\
&= 0.
\end{align*}
Therefore
\begin{align*}\limsup_{n\rightarrow \infty}|C_{n}^{k}|
&\leq \sum_{\substack{1\leq i_{1},i_{2},i_{3},i_{4}\leq k \\
\ i_{m}\neq i_{j}\  \\ \text{for some}\ m\neq
j}}\limsup_{n\rightarrow \infty}\int
|\mathcal{L}(V_{i_{1}}(x-x_{n}^{i_{1}})\overline{V}_{i_{2}}(x-x_{n}^{i_{2}}))V_{i_{3}}(x-x_{n}^{i_{3}})\overline{V}_{i_{4}}(x-x_{n}^{i_{4}})|dx
\\
&=0
\end{align*}
for $k\geq K(\epsilon)$, and (3.19) holds.  Combining (3.16),(3.17),(3.18) and (3.19)
\begin{align*}
\limsup_{n\rightarrow \infty}\int \mathcal{L}(|v_{n}|^{2})|v_{n}|^{2}dx
&\leq
\sum_{j=1}^{k}\int
\mathcal{L}(|V^{j}|^{2})|V^{j}|^{2}dx  + \limsup_{n\rightarrow \infty}|R_{n}^{k}| +  \limsup_{n\rightarrow \infty}|C_{n}^{k}| \\
&\leq \sum_{j=1}^{k}\int
\mathcal{L}(|V^{j}|^{2})|V^{j}|^{2}dx + \epsilon
\end{align*} for $k\geq K(\epsilon)$.  This completes the proof of Lemma 1.
\qed

\section{Mass Concentration for $L^{2}$-solutions}
For the proof of Theorem 7 we require two lemmas from \cite{B1}.

\begin{lemma}[Squares Lemma] Given $f\in L^{2}(\mathbb{R}^{2})$, $\epsilon>0$, $\exists\, (f_{r})_{1\leq r < R(\epsilon)}\subset L^{2}(\mathbb{R}^{2})$ such that
\begin{enumerate}
\renewcommand{\labelenumi}{(\alph{enumi})}
\renewcommand{\labelenumii}{(\alph{enumii})}
\renewcommand{\labelenumiii}{(\alph{enumiii})}
\renewcommand{\labelenumiv}{(\alph{enumiv})}
\item $\emph{supp}\,\hat{f}_{r} \subset \tau_{r} \subset \subset \mathbb{R}^{2}$, where $\tau_{r}$ is a square of side  $\ell_{r}$, centre $\xi_{r}$. \\
\item  $|\hat{f}_{r}| \lesssim_{\epsilon} \frac{1}{\ell_{r}}$. \\
\item  $\|f_{r}\|_{2} \geq \epsilon'(\epsilon) >0$.  \\
\item  $\|e^{it\Delta}f- \Sigma_{1\leq r < R(\epsilon)}e^{it\Delta}f_{r}\|_{L^{4}_{x,t}} < \epsilon$.
\end{enumerate}
\end{lemma}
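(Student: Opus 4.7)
The plan is to follow Bourgain's greedy extraction procedure from \cite{B1}. The central ingredient is the refined Strichartz inequality
$$
\|e^{it\Delta}f\|_{L^{4}_{x,t}(\mathbb{R}\times\mathbb{R}^{2})}^{4} \;\lesssim\; \|f\|_{2}^{3}\,\sup_{\tau}\,\ell(\tau)\,\|\hat{f}\|_{L^{\infty}(\tau)},
$$
where the supremum ranges over squares $\tau\subset\mathbb{R}^{2}$ of sidelength $\ell(\tau)$. This sharpening of the standard Strichartz estimate is the technical heart of \cite{B1}; I would invoke it as a black box, its proof resting on a bilinear $L^{2}$ estimate combined with a Whitney-type frequency decomposition.

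Given the refined Strichartz inequality and $\epsilon>0$, I will fix a threshold $\delta=\delta(\epsilon,\|f\|_{2})>0$ so that $\|f\|_{2}^{3}\,\delta<\epsilon^{4}$ up to the implicit constant, and then set $g_{0}:=f$ and iterate as follows. At stage $r$, if $\sup_{\tau}\ell(\tau)\|\hat{g}_{r-1}\|_{L^{\infty}(\tau)}\leq\delta$, I stop; otherwise I select a square $\tau_{r}$ of sidelength $\ell_{r}$ and centre $\xi_{r}$ with $\ell_{r}\|\hat{g}_{r-1}\|_{L^{\infty}(\tau_{r})}>\delta/2$. By passing to a subsquare if necessary, I arrange that $\|\hat{g}_{r-1}\|_{L^{\infty}(\tau_{r})}$ is comparable to $1/\ell_{r}$ up to a factor depending only on $\epsilon$; this scale matching is what reconciles condition (b) with the concentration condition on $\hat{g}_{r-1}$. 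I then define $\hat{f}_{r}$ to be a bump supported in $\tau_{r}$ with $|\hat{f}_{r}|\lesssim_{\epsilon}1/\ell_{r}$ and with $\langle \hat{g}_{r-1},\hat{f}_{r}\rangle_{L^{2}}\gtrsim_{\epsilon}1$. Properties (a) and (b) hold by construction, and (c) follows from $\|f_{r}\|_{2}=\|\hat{f}_{r}\|_{2}\gtrsim_{\epsilon}1=:\epsilon'(\epsilon)$, since $\hat{f}_{r}$ is of order $1/\ell_{r}$ on a set of area $\sim\ell_{r}^{2}$. Set $g_{r}:=g_{r-1}-f_{r}$.

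The iteration must terminate in finitely many steps $R(\epsilon)$: by Plancherel and the inner-product lower bound, one obtains $\|g_{r}\|_{2}^{2}\leq\|g_{r-1}\|_{2}^{2}-c(\epsilon)$ at each stage, capping the total number of extractions at $R(\epsilon)\lesssim\|f\|_{2}^{2}/c(\epsilon)$. At termination, $g:=g_{R-1}$ obeys $\sup_{\tau}\ell(\tau)\|\hat{g}\|_{L^{\infty}(\tau)}\leq\delta$, and the refined Strichartz inequality then delivers $\|e^{it\Delta}g\|_{L^{4}_{x,t}}^{4}\lesssim\|g\|_{2}^{3}\,\delta\lesssim\|f\|_{2}^{3}\,\delta<\epsilon^{4}$, which is property (d).

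The principal obstacle is the refined Strichartz inequality itself, which is deep and would be cited directly from \cite{B1}. The secondary — but non-trivial — difficulty is arranging that properties (b) and (c) coexist: one must pick the subsquare and the bump profile carefully so that a pointwise bound $|\hat{f}_{r}|\lesssim_{\epsilon}1/\ell_{r}$ and a definite $L^{2}$-mass bound $\|f_{r}\|_{2}\geq\epsilon'(\epsilon)$ hold at the same time, while still guaranteeing enough orthogonality between $\hat{g}_{r-1}$ and $\hat{f}_{r}$ to force a uniform mass drop at each iteration.
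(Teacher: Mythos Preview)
Your proposal has two genuine gaps. First, the refined Strichartz inequality you write down is vacuous as stated: for any nonzero $\hat{f}$, the quantity $\sup_{\tau}\ell(\tau)\|\hat{f}\|_{L^{\infty}(\tau)}$ is $+\infty$, since one may take squares $\tau$ of arbitrarily large sidelength containing a point where $\hat{f}\neq 0$. The actual refinement used in \cite{B1} (due to \cite{MVV}) involves an $L^{p}$ average, $12/7<p<2$, over dyadic squares: one has $\|e^{it\Delta}f\|_{L^{4}}^{4}\lesssim\big[\text{a factor bounded by }\|f\|_{2}^{2p}\big]\cdot\max_{j,\tau}\big(\delta_{j}^{p-2}\int_{\tau}|\hat{f}|^{p}\big)^{2(2-p)/p}$, where $\tau$ ranges over a dyadic grid of $\delta_{j}$-squares. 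The $L^{p}$ (rather than $L^{\infty}$) concentration is essential for the next point.

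Second, your extraction step does not deliver the mass drop you claim. Knowing only that $\ell_{r}\|\hat{g}_{r-1}\|_{L^{\infty}(\tau_{r})}>\delta/2$ gives pointwise information at a single frequency; it does not force $\hat{g}_{r-1}$ to carry appreciable $L^{2}$ mass on $\tau_{r}$, so a generic bump $\hat{f}_{r}$ of height $\sim 1/\ell_{r}$ on $\tau_{r}$ need not satisfy $2\mathrm{Re}\langle\hat{g}_{r-1},\hat{f}_{r}\rangle-\|\hat{f}_{r}\|_{2}^{2}\geq c(\epsilon)$. The paper avoids this entirely: once a dyadic square $\tau$ is found with $\delta_{j}^{p-2}\int_{\tau}|\hat{f}|^{p}$ large, one sets $\hat{f}_{1}:=\hat{f}\,\chi_{\tau\cap\{|\hat{f}|<M\}}$ with $M\sim_{\epsilon}1/\delta_{j}$. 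This truncation simultaneously gives (b) (trivially, $|\hat{f}_{1}|\leq M$), (c) (a H\"older splitting shows $\int_{\tau\cap\{|\hat{f}|<M\}}|\hat{f}|^{2}\gtrsim_{\epsilon}1$), and exact orthogonality $\|f-f_{1}\|_{2}^{2}=\|f\|_{2}^{2}-\|f_{1}\|_{2}^{2}$, since $\hat{f}_{1}$ and $\hat{f}-\hat{f}_{1}$ have disjoint supports. That orthogonality is what forces the iteration to terminate. Your ``bump'' construction would have to be replaced by this level-set cutoff of $\hat{g}_{r-1}$ itself, and the $L^{\infty}$-based selection criterion replaced by the $L^{p}$-averaged one, for the argument to go through.
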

\begin{proof} (Outline, a complete proof is found in \cite[p. 255]{B1}, section 2.)
The properties (a)-(d) are scale invariant, by adjusting $\ell_{r}$ as needed.  We can therefore assume $~\mbox{supp}\, \hat{f} \subset B(0,1)$.  For each $j=1,2,3...$, let $\delta_{j}=2^{-j}$, and take $C_{j}$ to be a grid of $\delta_{j}\times \delta_{j}$ squares partitioning $B(0,1)$.  Fixing $12/7<p<2$, the key input for this proof is the following Strichartz refinement estimate from \cite{MVV}
\begin{align*}
\|e^{it\Delta}f\|^{4}_{L^{4}_{x,t}} &\leq C\Big[\sum_{j=1}^{\infty}\sum_{\tau\in C_{j}}\delta_{j}^{4}\Big(\frac{1}{|\tau|}\int_{\tau}|\hat{f}|^{p}\Big)^{4/p}\Big].
\end{align*}
This leads to
\begin{align}
\|e^{it\Delta}f\|^{4}_{L^{4}_{x,t}}
&\leq  C\Big[\sum_{j=1}^{\infty}\sum_{\tau\in C_{j}}
\frac{1}{\delta_{j}^{2(2-p)}}\Big(\int_{\tau}|\hat{f}|^{p}\Big)^{2}\Big]\max_{\substack{j\in \mathbb{Z^{+}} \\ \tau \in C_{j}}} \Big(\delta_{j}^{p-2}\int_{\tau}|\hat{f}|^{p}\Big)^{\frac{2}{p}(2-p)}.
\end{align}
We bound the first factor in (4.1) by decomposing $(B(0,1))^{2}$ into disjoint sets of the form
\begin{align*}
\Lambda_{j}=\{(x,y)\in B(0,1)\times B(0,1):\delta_{j+1}< |x-y|\leq \delta_{j}\},
\end{align*}
further decomposing each $\Lambda_{j}$ into 4-dimensional cubes $\sigma$ of sidelength $\delta_{j}$, and considering
\begin{align}
\iint_{(B(0,1))^{2}}\frac{|\hat{f}(x)|^{p}|\hat{f}(y)|^{p}}{|x-y|^{2(2-p)}}dxdy &= \sum_{j=1}^{\infty}
\sum_{\sigma\in \Lambda_{j}}\iint_{\sigma}\frac{|\hat{f}(x)|^{p}|\hat{f}(y)|^{p}}{|x-y|^{2(2-p)}}dxdy \notag \\ &\geq \sum_{j=1}^{\infty}
\sum_{\sigma\in \Lambda_{j}}\frac{1}{\delta_{j}^{2(2-p)}}\iint_{\sigma}|\hat{f}(x)|^{p}|\hat{f}(y)|^{p}dxdy  \notag  \\
&= \sum_{j=1}^{\infty} \sum_{\tau \in C_{j}}\frac{1}{\delta_{j}^{2(2-p)}}\Big(\int_{\tau}|\hat{f}|^{p}\Big)^{2}.
\end{align}
By Hardy-Littlewood-Sobolev, we have
\begin{align}
\iint_{(B(0,1))^{2}}\frac{|\hat{f}(x)|^{p}|\hat{f}(y)|^{p}}{|x-y|^{2(2-p)}}dxdy \leq C\|f\|_{2}^{2p} = C.
\end{align}
\noindent If $\|e^{it\Delta}f\|_{4}<\epsilon$, there is nothing to prove.  If $\|e^{it\Delta}f\|_{4}\geq \epsilon$, then by (4.1),(4.2) and (4.3)
\begin{align}
\frac{1}{\delta_{j}^{2-p}}\int_{\tau}|\hat{f}|^{p} > C\epsilon^{\frac{2p}{2-p}}
\end{align}
for some $\delta_{j}$-square $\tau$.  With two applications of H\"{o}lder's inequality, (4.4) gives
\begin{align*}
C\epsilon^{\frac{2p}{2-p}} &= \frac{1}{\delta_{j}^{2-p}}\int_{\tau\cap \{|\hat{f}|<M\}}|\hat{f}|^{p} + \frac{1}{\delta_{j}^{2-p}}\int_{\tau\cap \{|\hat{f}|\geq M\}}|\hat{f}|^{p} \\
&\leq \big(\int_{\tau\cap \{|\hat{f}|<M\}}|\hat{f}|^{2}\big)^{p/2} + \frac{1}{(M\delta_{j})^{2-p}}\|f\|_{2}^{2} \\
&\lesssim \big(\int_{\tau\cap \{|\hat{f}|<M\}}|\hat{f}|^{2}\big)^{p/2} + \frac{C}{2}\epsilon^{\frac{2p}{2-p}}, \ \ \ \text{by choosing} \ \ M \sim \frac{1}{\epsilon^{\frac{2p}{(2-p)^{2}}}\delta_{j}}.
\end{align*}
That is,
\begin{align}
\int_{\tau\cap \{|\hat{f}|<M\}}|\hat{f}|^{2} > \frac{C}{2}\epsilon^{\frac{4}{2-p}}.
\end{align}
The function $f_{1}$ defined by $\hat{f_{1}}=\hat{f}\chi_{\tau\cap\{|\hat{f}|<M\}}$ will satisfy properties (a)-(c) with $\ell_{1}=\delta_{j}$, where $\delta_{j}$ is the sidelength of the square $\tau$.  Replacing $f$ by $f^{1}=f-f_{1}$, $\hat{f^{1}}$ and $\hat{f_{1}}$ are orthogonal in $L^{2}(\mathbb{R}^{2})$, and thus
\begin{align}
\|\hat{f^{1}}\|_{2}^{2}&= \|\hat{f}\|_{2}^{2} - \|\hat{f_{1}}\|_{2}^{2}  \notag \\
&\leq \|f\|_{2}^{2}- \frac{C}{2}\epsilon^{\frac{4}{2-p}}.
\end{align}
If $\|e^{it\Delta}f^{1}\|_{L^{4}_{x,t}} > \epsilon$, repeat this procedure.  By (4.6), this process will terminate in finitely many steps $R=R(\epsilon)$, producing a sequence of functions $(f_{r})_{1\leq r\leq R(\epsilon)}$ satisfying properties (a)-(d).
\end{proof}
\begin{lemma}[Tubes Lemma] Given $g\in L^{2}(\mathbb{R}^{2})$ such that
\begin{enumerate}
\renewcommand{\labelenumi}{(\alph{enumi})}
\renewcommand{\labelenumii}{(\alph{enumii})}
\item $\,\emph{supp}\,\hat{g} \subset \tau \subset \subset \mathbb{R}^{2}$ where $\tau$ is a square of side  $\ell$, centre $\xi_{0}$.
\item  $|\hat{g}| \leq \frac{1}{\ell}$. \\
\end{enumerate}
$\forall \epsilon >0, \  \exists$ tubes  $(Q_{s})_{1\leq s < S(\epsilon)}$ of the form
$
Q_{s} = \{ (t,x)\in \mathbb{R}^{3}: x+ 2t\xi_{0} \in \tau_{s},t\in J_{s}\}$ such that $\text{side}(\tau_{s})=\ell^{-1}, \ |J_{s}|=\ell^{-2}$, and  \begin{align*}  \big(\int_{\mathbb{R}^{3}\setminus \{\cup_{1\leq s < S(\epsilon)}Q_{s}\}}|e^{it\Delta}g|^{4}dxdt\big)^{1/4}<\epsilon.
\end{align*}
\end{lemma}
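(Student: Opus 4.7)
The plan is to reduce to a canonical frequency cube via the symmetries of the free Schr\"odinger flow, and then to estimate $e^{it\Delta}g$ on each short time slab by a physical-space wave packet decomposition.

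First I would invoke the scaling $g(x) \mapsto \ell^{-1}g(x/\ell)$, which rescales the Fourier support of $\hat{g}$ to a cube of side $1$ and forces $|\hat{g}| \leq 1$, together with the Galilean boost $g(x) \mapsto e^{-i\xi_{0}\cdot x}g(x)$, which translates the Fourier support to the origin. Both operations interact with $e^{it\Delta}$ in a manner that sends spacetime unit cubes $\tau_{s} \times J_{s}$ in the rescaled/boosted frame to the advertised tubes $\{(t,x) : x + 2t\xi_{0} \in \tau_{s},\ t \in J_{s}\}$ with $\mathrm{side}(\tau_{s}) = \ell^{-1}$ and $|J_{s}| = \ell^{-2}$. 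It therefore suffices to prove the lemma in the normalized case $\ell = 1$, $\xi_{0} = 0$, for tubes that are genuine unit cubes in spacetime.

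In that case $\|g\|_{2} \leq 1$ (since $\hat{g}$ is $L^{\infty}$-bounded by $1$ on a set of area $1$), and the $L^{4}$-Strichartz estimate gives $\|e^{it\Delta}g\|_{L^{4}(\mathbb{R}^{3})} \lesssim 1$. Partitioning $\mathbb{R}$ into unit intervals $J_{n} = [n,n+1]$, the relation $\sum_{n}\|e^{it\Delta}g\|_{L^{4}(\mathbb{R}^{2} \times J_{n})}^{4} \lesssim 1$ shows that for any $\epsilon > 0$ the $L^{4}$-mass carried by slabs outside a finite set $\mathcal{N} \subset \mathbb{Z}$ is $< \epsilon^{4}$, so tubes need only be placed in the slabs $J_{n}$ with $n \in \mathcal{N}$. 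On each such slab, decompose $u(n,\cdot)$ using a smooth partition of unity $\{\chi_{m}\}_{m \in \mathbb{Z}^{2}}$ subordinate to integer unit cubes by setting $g_{n,m} := \chi_{m}u(n,\cdot)$. Then $\sum_{m}\|g_{n,m}\|_{2}^{2} \lesssim \|g\|_{2}^{2} \leq 1$, and since $\widehat{u(n,\cdot)}$ is supported in the unit cube at the origin, each $\widehat{g_{n,m}}$ is essentially supported in a bounded neighborhood of that cube. Repeated integration by parts in the oscillatory integral for $e^{i(t-n)\Delta}g_{n,m}(x)$ then yields the pointwise decay
\[ |e^{i(t-n)\Delta}g_{n,m}(x)| \lesssim_{N} \|g_{n,m}\|_{2}(1 + |x - m|)^{-N} \]
uniformly in $t \in J_{n}$.

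Fix $\delta > 0$ and set $\mathcal{M}_{n} := \{m : \|g_{n,m}\|_{2} > \delta\}$, so that $\#\mathcal{M}_{n} \leq \delta^{-2}$. For $m \in \mathcal{M}_{n}$, declare the spacetime cube $\tau_{n,m} \times J_{n}$, where $\tau_{n,m}$ is a cube of side $A = A(\epsilon)$ centered at $m$, to be one of the desired tubes. The $L^{4}$-norm of $e^{it\Delta}g$ off the union of these tubes splits into three pieces: the slabs with $n \notin \mathcal{N}$, already controlled by construction; the heavy packets $m \in \mathcal{M}_{n}$ evaluated outside $\tau_{n,m}$, bounded by $C_{N}A^{-N}\delta^{-2}$ via the pointwise decay above; and the aggregate of light packets $m \notin \mathcal{M}_{n}$ on $\mathbb{R}^{2} \times J_{n}$, controlled through Strichartz applied to each packet together with approximate spatial orthogonality, giving $\sum_{m \notin \mathcal{M}_{n}}\|e^{i(\cdot - n)\Delta}g_{n,m}\|_{L^{4}(\mathbb{R}^{2} \times J_{n})}^{4} \lesssim \sum_{m}\|g_{n,m}\|_{2}^{4} \lesssim \delta^{2}$. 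Choosing $\delta$ and then $A$ appropriately renders each contribution $< \epsilon$, and undoing the symmetry reductions produces tubes of the required form.

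The main obstacle is the $L^{4}$-orthogonality argument used to dispose of the light packets: the pointwise tails $(1 + |x - m|)^{-N}$ must be summed over $m$ in a way that respects the $L^{4}$ norm rather than the $L^{4}$ norm of the absolute-value sum (which would blow up multiplicities). This is essentially a Cordoba--Fefferman-type almost-disjointness estimate for the evolved wave packets on unit-scale tubes in spacetime, and constitutes the technical core of the argument; after that, the remaining steps reduce to routine Strichartz-and-H\"older bookkeeping.
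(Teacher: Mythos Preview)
Your reduction to the normalized case $\ell=1$, $\xi_0=0$ via scaling and Galilean boost is correct and matches the paper. After that, however, there is a genuine gap.

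The lemma asserts that the number of tubes $S(\epsilon)$ depends only on $\epsilon$, not on $g$; this uniformity is essential in the application to Theorem~7, where one divides by $S(\eta^{10})$ to produce a constant $\eta_1=\eta^2/S(\eta^{10})$ that must be bounded below independently of the time slab $I_j$ (and hence of the particular $g=e^{-it_j\Delta}f_{r_1}$). Your argument does not deliver this. From $\sum_n\|e^{it\Delta}g\|_{L^4(\mathbb{R}^2\times J_n)}^4\lesssim 1$ you extract a finite set $\mathcal{N}$ of time slabs capturing all but $\epsilon^4$ of the mass, but the $L^4$ Strichartz estimate alone furnishes no uniform bound on $|\mathcal{N}|$ over the admissible class of $g$'s. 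Your subsequent bounds---$C_NA^{-N}\delta^{-2}$ for heavy-packet tails and $\delta^2$ for light packets---are \emph{per slab}; summing over $n\in\mathcal{N}$ forces $\delta$, $A$, and hence the total tube count $|\mathcal{N}|\cdot\delta^{-2}\cdot A^2$, to depend on $|\mathcal{N}|$ and therefore on $g$. In effect, the statement that the spacetime $L^4$-mass of $e^{it\Delta}g$ concentrates on a uniformly bounded number of unit cubes is precisely the content of the lemma, and your time-slab step presupposes it rather than proving it.

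The paper closes this gap with a deeper input that is absent from your toolkit: Bourgain's estimate $\|e^{it\Delta}g'\|_{L^q(\mathbb{R}^3)}\leq C$ for some $q<4$, valid whenever $\mathrm{supp}\,\hat{g'}\subset B(0,1)$ and $|\hat{g'}|\leq 1$. This yields both $\int_{\{|e^{it\Delta}g'|<\lambda\}}|e^{it\Delta}g'|^4\leq C\lambda^{4-q}<\epsilon^4$ for an appropriate $\lambda=\lambda(\epsilon)$, and $\mu(\{|e^{it\Delta}g'|>\lambda\})\leq C\lambda^{-q}$. Since $e^{it\Delta}g'$ is Lipschitz in $(t,x)$ (a consequence of the Fourier support hypothesis), the super-level set can then be covered by a number of unit cubes depending only on $\lambda$, hence only on $\epsilon$. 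Incidentally, the step you flagged as the technical core---the $L^4$ almost-orthogonality of light packets on a fixed slab---is actually routine: two applications of Cauchy--Schwarz in the $m$-sum, using the pointwise tail bound, give $\|\sum_{m\notin\mathcal{M}_n}e^{i(\cdot-n)\Delta}g_{n,m}\|_{L^4}^4\lesssim\sum_m\|g_{n,m}\|_2^4\leq\delta^2$. The real obstacle is the uniformity across slabs, and that is where the $L^q$ estimate with $q<4$ is indispensable.
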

\begin{proof} (Outline, a complete proof is found in \cite[p. 257]{B1}, section 3.)
Let $g'$ be the function defined by
\begin{align}
\hat{g'}(\xi)=\ell\hat{g}(\xi_{0}+\ell\xi), \ \ \text{and take} \ t'=\ell^{2}t.
\end{align}
From the assumptions (a),(b), this implies
\begin{align}
\mbox{supp}\,\hat{g'} \subset B(0,1), \ \ \text{and} \ |\hat{g'}|<1.
\end{align}
The key ingredient of this proof is the following estimate from \cite{B2}: $\exists \, q^{*}<4$ such that $\forall q\in(q^{*},4]$
\begin{align*}
\|\int_{B(0,1)}F(\xi)e^{i(x\cdot\xi + t|\xi|^{2})}d\xi\|_{L^{q}_{\mathbb{R}^{3}}} \leq C \|F\|_{L^{\infty}_{\mathbb{R}^{2}}}.
\end{align*}
Fix a $q\in(q^{*},4]$, then this gives
\begin{align*}
\|e^{it'\Delta}g'\|_{L^{q}_{\mathbb{R}^{3}}} \leq C.
\end{align*}
Consider, for $\lambda>0$, that
\begin{align*}
\frac{1}{\lambda^{4-q}}\Big( \int_{\{|e^{it'\Delta}g'|<\lambda\}}|e^{it'\Delta}g'|^{4}dxdt\Big) <
 \int_{\{|e^{it'\Delta}g'|<\lambda\}}|e^{it'\Delta}g'|^{q}dxdt < C.
\end{align*}
Choosing $\lambda=\lambda(\epsilon)\sim \epsilon^{\frac{1}{1-q/4}}$, we find
\begin{align*}
\int_{\{|e^{it'\Delta}g'|<\lambda\}}|e^{it'\Delta}g'|^{4}dxdt < C\lambda^{4-q} < \epsilon^{4}.
\end{align*}
Letting $\mu$ denote Lebesgue measure in $\mathbb{R}^{3}$, then
\begin{align*}
\lambda^{q}\mu(\{|e^{it'\Delta}g'|>\lambda\}) \leq \int_{\{|e^{it'\Delta}g'|>\lambda\}}|e^{it'\Delta}g'|^{q}dxdt \leq C,
\end{align*}
which gives
\begin{align*}
\mu(\{|e^{it'\Delta}g'|>\lambda\}) \leq  \frac{C}{\lambda^{q}} < \infty.
\end{align*}
By (4.8), $e^{it'\Delta}g'(x')$ is Lipschitz in $t',x'$.  That is,
\begin{align}
|(e^{it'\Delta}g')(x')-(e^{it''\Delta}g')(x'')| \leq C(|x'-x''|+|t'-t''|).
\end{align}
Cover $\{|e^{it\Delta}g'|>\lambda\}$ with finitely many disjoint unit cubes $B_{s}\subset\mathbb{R}^{3}$.  Let $S$ denote the total number of cubes.  By (4.9) each cube $B_{s}$ contains a subcube $D_{s}$ of side $\sim \lambda$, such that for all $x\in D_{s}$, $|e^{it'\Delta}g'(x)|>\frac{\lambda}{2}$.
Therefore
\begin{align*}
C\geq \int_{\mathbb{R}^{3}}|e^{it\Delta}g|^{4}dxdt \geq \int_{\cup_{s}D_{s}} |e^{it\Delta}g|^{4}dxdt \gtrsim (side(D_{s}))^{3}\big(\frac{\lambda}{2}\big)^{4}S \sim \lambda^{7}S \sim \epsilon^{\frac{7}{1-q/4}}S.
\end{align*}
Thus $\{|e^{it'\Delta}g'|>\lambda\}$ can be covered by at most $S=S(\epsilon)\lesssim \epsilon^{\frac{-7}{1-q/4}}$ disjoint unit cubes $B_{s}\subset \mathbb{R}^{3}$.  These cubes satisfy
\begin{align*}
\int_{\mathbb{R}^{3}\setminus \cup_{1\leq s \leq S(\epsilon)}B_{s}}|e^{it'\Delta}g'|^{4}dxdt \leq \int_{\{|e^{it'\Delta}g'|<\lambda\}}|e^{it'\Delta}g'|^{4}dxdt < \epsilon^{4}.
\end{align*}
Undoing the scaling (4.7), the unit cubes $(B_{s})_{1\leq s \leq S(\epsilon)}$ become tubes $(Q_{s})_{1\leq s \leq S(\epsilon)}$ which satisfy the claims of Lemma 5.
\end{proof}
\begin{remark}An alternate proof of Lemma 5, which generalizes to higher dimensions, is found in \cite{BV}. See also \cite{CK}.\end{remark}

\noindent \textit{Proof of Theorem 7.}  Suppose $L^{2}(\mathbb{R}^{2})\ni u_{0}\mapsto u(t)$ is a maximal in time solution to (1.3) such that $T^{*}< \infty$.  From part (a) of Theorem 1, this necessitates
\begin{align*}
\|u\|_{L^{4}_{[0,T^{*}]\times \mathbb{R}^{2}}}= \infty.
\end{align*}
We partition $[0,T^{*})=\cup_{j\geq1}I_{j}$ into disjoint intervals $I_{j}=[t_{j},t_{j+1})$, such that
\begin{align}
\|u\|_{L^{4}_{I_{j}\times \mathbb{R}^{2}}}= \lambda
\end{align}
for all $j$, for some fixed $\lambda\ll 1$.  The integral formulation of (1.3) on the interval $I_{j}$ is given by
\begin{align*}
u(t)=e^{i(t-t_{j})\Delta}u(t_{j}) + i\int_{t_{j}}^{t}e^{i(t-s)\Delta}(u(s)\mathcal{L}(|u(s)|^{2}))ds.
\end{align*}
This implies
\begin{align}
\|u(t)-e^{i(t-t_{j})\Delta}u(t_{j})\|_{L^{4}_{I_{j}\times \mathbb{R}^{2}}} & = \|\int_{t_{j}}^{t}e^{i(t-s)\Delta}(u(s)\mathcal{L}(|u(s)|^{2}))ds\|_{L^{4}_{I_{j}\times \mathbb{R}^{2}}} \notag\\
&\leq \| \int_{t_{j}}^{t}\|e^{i(t-s)\Delta}(u(s)\mathcal{L}(|u(s)|^{2}))\|_{L^{4}_{\mathbb{R}^{2}}}ds\|_{L^{4}_{I_{j}}} \notag\\
& \lesssim \|\int_{t_{j}}^{t}\frac{1}{|t-s|^{1/2}}\|u(s)\mathcal{L}(|u(s)|^{2})\|_{L^{4/3}_{\mathbb{R}^{2}}}ds\|_{L^{4}_{I_{j}}} \\ &\lesssim \|u\mathcal{L}(|u|^{2})\|_{L^{4/3}_{I_{j}\times \mathbb{R}^{2}}} \\
&= \|\big(\int_{\mathbb{R}^{2}}|u|^{4/3}|\mathcal{L}(|u|^{2})|^{4/3}dx\big)^{3/4}\|_{L^{4/3}_{I_{j}}}.\notag
\end{align}
Here (4.11) follows from the operator bound $\|e^{is\Delta}\|_{L^{4/3}\rightarrow L^{4}}\lesssim \frac{1}{|s|^{1/2}}$, and (4.12) from $\|\int\frac{1}{|t-s|^{1/2}}\psi(s)ds \|_{4} \lesssim \|\psi\|_{4/3}$.
By H\"{o}lder's inequality, and (3.20),
\begin{align*}
\int_{\mathbb{R}^{2}}|u|^{4/3}|\mathcal{L}(|u|^{2})|^{4/3}dx & \leq  \big(\int_{\mathbb{R}^{2}}|u|^{4}dx\big)^{1/3}\big(\int_{\mathbb{R}^{2}}|\mathcal{L}(|u|^{2})|^{2}dx\big)^{2/3} \\
& \leq \big(\int_{\mathbb{R}^{2}}|u|^{4}dx\big)^{1/3}\big((1+\gamma)^{2}\int_{\mathbb{R}^{2}}|u|^{4}dx\big)^{2/3} \\
& = (1+ \gamma)^{4/3}\|u\|_{L^{4}_{\mathbb{R}^{2}}}^{4}.
\end{align*}
This gives
\begin{align}
\|u(t)-e^{i(t-t_{j})\Delta}u(t_{j})\|_{L^{4}_{I_{j}\times \mathbb{R}^{2}}} &\lesssim (1+ \gamma)\|u\|^{3}_{L^{4}_{I_{j}\times \mathbb{R}^{2}}} \notag \\
& \lesssim \lambda^{3},
\end{align}
and then
\begin{align}
\|e^{i(t-t_{j})\Delta}u(t_{j})\|_{L^{4}_{I_{j}\times \mathbb{R}^{2}}} &\leq \|u(t)\|_{L^{4}_{I_{j}\times \mathbb{R}^{2}}} + \|e^{i(t-t_{j})\Delta}u(t_{j})-u(t)\|_{L^{4}_{I_{j}\times \mathbb{R}^{2}}} \notag \\ &\lesssim \lambda + \lambda^{3} \notag \\
&\lesssim \lambda.
\end{align}
Justifying (4.13) and (4.14) as above is the only place where structure specific to (1.1) will be invoked.
The remainder of our proof of Theorem 7 will mimic the proof from \cite{B1} exactly.  By (4.10), (4.13), (4.14), and H\"{o}lder's inequality, we find
\begin{align*}
\lambda^{4}&= \int_{I_{j}}\int_{\mathbb{R}^{2}}|u(t)|^{4}dxdt \\ &= \int_{I_{j}}\int_{\mathbb{R}^{2}}u(t)(e^{i(t-t_{j})\Delta}u(t_{j}) + (u(t)-e^{i(t-t_{j})\Delta}u(t_{j}))) \\ &\ \ \ \ \ \ \ \ \  \ \ \ \ \ \ \ \ \cdot\overline{(e^{i(t-t_{j})\Delta}u(t_{j}) + (u(t)-e^{i(t-t_{j})\Delta}u(t_{j})))}\,\overline{(e^{i(t-t_{j})\Delta}u(t_{j}) + (u(t)-e^{i(t-t_{j})\Delta}u(t_{j})))}dxdt \\
&= \int_{I_{j}}\int_{\mathbb{R}^{2}}u(t)(e^{i(t-t_{j})\Delta}u(t_{j}))(\overline{e^{i(t-t_{j})\Delta}u(t_{j})})^{2}dxdt + O(\lambda^{6}).
\end{align*}
Applying Lemma 4 to $f:=u(t_{j})$, with $\epsilon=\lambda^{2}$, $\exists$ functions $(f_{r})_{1\leq r < R(\lambda^{2})}\subset L^{2}(\mathbb{R}^{2})$ satisfying properties $(a)$-$(d)$.  Property (d), (4.10), (4.14) and H\"{o}lder's inequality imply that
\begin{align*}
\lambda^{4} &= \sum_{r_{1},r_{2},r_{3}<R(\lambda^{2})}\int_{I_{j}}\int_{\mathbb{R}^{2}}u(t)(e^{i(t-t_{j})\Delta}f_{r_{1}})(\overline{e^{i(t-t_{j})\Delta}f_{r_{2}}})(\overline{e^{i(t-t_{j})\Delta}f_{r_{3}}})dxdt + O(\lambda^{5}).
\end{align*}
Therefore there is a choice of $r_{1},r_{2},r_{3}<R(\lambda^{2})$ such that
\begin{align*}
\int_{I_{j}}\int_{\mathbb{R}^{2}}u(t)(e^{i(t-t_{j})\Delta}f_{r_{1}})(\overline{e^{i(t-t_{j})\Delta}f_{r_{2}}})(\overline{e^{i(t-t_{j})\Delta}f_{r_{3}}})dxdt > \frac{\lambda^{4}}{(R(\lambda^{2}))^{3}} =: \eta >0.
\end{align*}
Here $\mbox{supp}\,(\mathcal{F}({e^{i(t-t_{j})\Delta}f_{r_{i}}}))= \mbox{supp}\,(\hat{f}_{r_{i}})\subset \tau_{r_{i}}$, a square of side $\ell_{r_{i}}>0$.  Assume $\ell_{r_{1}}\geq \ell_{r_{2}}\geq \ell_{r_{3}}$.  Letting $\psi_{i}:=e^{i(t-t_{j})\Delta}f_{r_{i}}$, by Plancherel's theorem we can write
\begin{align*}
\int_{\mathbb{R}^{2}} u(x)\psi_{1}(x)\overline{\psi_{2}(x)}\overline{\psi_{3}(x)}dx = \iiint_{(\mathbb{R}^{2})^{3}}\hat{u}(\xi)\hat{\psi}_{1}(\xi_{1}-\xi)\hat{\overline{\psi}}_{2}(\xi_{2})\hat{\overline{\psi}}_{3}(\xi_{1}-\xi_{2})d\xi_{2}d\xi_{1}d\xi.
\end{align*}
The product $\hat{u}(\xi)\hat{\psi}_{1}(\xi_{1}-\xi)\hat{\overline{\psi}}_{2}(\xi_{2})\hat{\overline{\psi}}_{3}(\xi_{1}-\xi_{2})$ is non-zero only when
$\xi_{1}-\xi \in \tau_{r_{1}}, \xi_{2} \in \tau_{r_{2}},$ and $\xi_{1}-\xi_{2} \in \tau_{r_{3}}.$
This implies that $\xi \in \tau, \ \text{a square of sidelength} \ \ell := 3\ell_{r_{1}}.$
Let $P_{\tau}$ be the Fourier restriction operator defined by $\widehat{P_{\tau}f}=\chi_{\tau}\hat{f}$, where $\chi_{\tau}$ is the characteristic function of the square $\tau$.  We find
\begin{align*}
\int_{\mathbb{R}^{2}} u(x)\psi_{1}(x)\overline{\psi_{2}(x)}\overline{\psi_{3}(x)}dx &= \iiint_{(\mathbb{R}^{2})^{3}}\hat{u}(\xi)\hat{\psi}_{1}(\xi_{1}-\xi)\hat{\overline{\psi}}_{2}(\xi_{2})\hat{\overline{\psi}}_{3}(\xi_{1}-\xi_{2})d\xi_{2}d\xi_{1}d\xi
\\ &=\iiint_{(\mathbb{R}^{2})^{3}}\widehat{P_{\tau}u}(\xi)\hat{\psi}_{1}(\xi_{1}-\xi)\hat{\overline{\psi}}_{2}(\xi_{2})\hat{\overline{\psi}}_{3}(\xi_{1}-\xi_{2})d\xi_{2}d\xi_{1}d\xi
\\
&=\int_{\mathbb{R}^{2}} P_{\tau}u(x)\psi_{1}(x)\overline{\psi_{2}(x)}\overline{\psi_{3}(x)}dx.
\end{align*}
This leads to
\begin{align*}
\eta &< \iint_{I_{j}\times\mathbb{R}^{2}} u(t)(e^{i(t-t_{j})\Delta}f_{r_{1}})(\overline{e^{i(t-t_{j})\Delta}f_{r_{2}}})(\overline{e^{i(t-t_{j})\Delta}f_{r_{3}}})dxdt  \\
&= \iint_{I_{j}\times\mathbb{R}^{2}} P_{\tau}u(t)(e^{i(t-t_{j})\Delta}f_{r_{1}})(\overline{e^{i(t-t_{j})\Delta}f_{r_{2}}})(\overline{e^{i(t-t_{j})\Delta}f_{r_{3}}})dxdt  \\
&\leq  \big( \iint_{I_{j}\times\mathbb{R}^{2}}|P_{\tau}u|^{2}|e^{i(t-t_{j})\Delta}f_{r_{1}}|^{2}dxdt\big)^{1/2}\|e^{i(t-t_{j})\Delta}f_{r_{2}}\|_{L^{4}_{I_{j}\times\mathbb{R}^{2}}}\|e^{i(t-t_{j})\Delta}f_{r_{3}}\|_{L^{4}_{I_{j}\times\mathbb{R}^{2}}}
\ \ \text{by H\"{o}lder's inequality, }
\\
&\lesssim  \big( \iint_{I_{j}\times\mathbb{R}^{2}}|P_{\tau}u|^{2}|e^{i(t-t_{j})\Delta}f_{r_{1}}|^{2}dxdt\big)^{1/2}\|f_{r_{2}}\|_{2}\|f_{r_{3}}\|_{2}
\\
&\lesssim  \big( \iint_{I_{j}\times\mathbb{R}^{2}}|P_{\tau}u|^{2}|e^{i(t-t_{j})\Delta}f_{r_{1}}|^{2}dxdt\big)^{1/2}.
\end{align*}
That is, we have
\begin{align}
c\eta^{2}< \iint_{I_{j}\times\mathbb{R}^{2}}|P_{\tau}u|^{2}|e^{i(t-t_{j})\Delta}f_{r_{1}}|^{2}dxdt.
\end{align}
Applying Lemma 5 to $g=e^{-it_{j}\Delta}f_{r_{1}}$ with $\epsilon=\eta^{10}$, there are tubes $(Q_{s})_{1\leq s < S(\eta^{10})}$ such that
\begin{align}
\iint_{[\mathbb{R}^{3}\setminus\cup_{s} Q_{s}]\cap[I_{j}\times\mathbb{R}^{2}]}|P_{\tau}u|^{2}|e^{i(t-t_{j})\Delta}f_{r_{1}}|^{2}dxdt \leq \|P_{\tau}u\|^{2}_{L^{4}_{I_{j}\times \mathbb{R}^{2}}}\eta^{20} \ll c\eta^{2}.
\end{align}
Combining (4.15) and (4.16), there is a choice of $Q=\{(t,x):x+2t\xi_{0}\in K, t\in J\cap I_{j}\}\in(Q_{s})_{1\leq s < S(\eta^{10})}$, where $K$ is a square of sidelength $\frac{1}{\ell}$, and $J$ is an interval of length $\frac{1}{\ell^{2}}$, such that
\begin{align}
&\iint_{Q\cap [I_{j}\times \mathbb{R}^{2}]}|P_{\tau}u|^{2}|e^{i(t-t_{j})\Delta}f_{r_{1}}|^{2}dxdt  > \frac{\eta^{2}}{S(\eta^{10})}=: \eta_{1}>0.  \notag \\
\Rightarrow &\iint_{\{(t,x):x+2t\xi_{0}\in K,t\in J\cap I_{j}\}}|P_{\tau}u|^{4}dxdt > c\eta_{1}^{2},
\end{align}
by H\"{o}lder's inequality, a Strichartz estimate, and $f_{r_{1}}\in L^{2}(\mathbb{R}^{2})$.
Now observe that
\begin{align}
\|P_{\tau}u(t)\|_{L^{\infty}_{x}} \leq \|\widehat{P_{\tau}u}(t)\|_{L^{1}_{\xi}} \leq |\tau|^{1/2}\|u_{0}\|_{2} \leq C\ell.
\end{align}
For a small constant $\delta>0$, we apply (4.17), split up the integral, and peel out two factors of $\|P_{\tau}u(t)\|_{L^{\infty}_{x}}$ to find
\begin{align*}
c\eta_{1}^{2} &< \int_{J\cap[t_{j},t_{j+1}-\frac{\delta\eta_{1}^{2}}{\ell^{2}}]}\int_{x\in K-2t\xi_{0}}|P_{\tau}u(t)|^{4}dx   + \int_{J\cap[t_{j+1}-\frac{\delta\eta_{1}^{2}}{\ell^{2}},t_{j+1}]}\int_{x\in K-2t\xi_{0}}|P_{\tau}u(t)|^{4}dx    \\
&< \frac{1}{\ell^{2}}\sup_{t\in J\cap[t_{j},t_{j+1}-\frac{\delta\eta_{1}^{2}}{\ell^{2}}]}\int_{x\in K-2t\xi_{0}}|P_{\tau}u(t)|^{4}dx  +  \int_{t_{j+1}-\frac{\delta\eta_{1}^{2}}{\ell^{2}}}^{t_{j+1}}\int_{x\in K-2t\xi_{0}}|P_{\tau}u(t)|^{4}dx   \\
&\leq  C\sup_{t\in J\cap[t_{j},t_{j+1}-\frac{\delta\eta_{1}^{2}}{\ell^{2}}]}\int_{x\in K-2t\xi_{0}}|P_{\tau}u(t)|^{2}dx  + C\ell^{2}\frac{\delta\eta_{1}^{2}}{\ell^{2}}\|u_{0}\|_{2}  \\
&=  C\sup_{t\in J\cap[t_{j},t_{j+1}-\frac{\delta\eta_{1}^{2}}{\ell^{2}}]}\int_{x\in K-2t\xi_{0}}|P_{\tau}u(t)|^{2}dx  + \delta C\eta_{1}^{2}.
\end{align*}
Choosing $\delta$ sufficiently small, there is some $t\in [t_{j},t_{j+1}-\frac{\delta\eta_{1}}{\ell^{2}}]$ and a square $E_{1}=K-2t\xi_{0}$ of sidelength $\frac{1}{\ell}$ such that
\begin{align}
\int_{E_{1}}|P_{\tau}u(t)|^{2}dx > c\eta_{1}^{2}.
\end{align}
From $t\in[t_{j},t_{j+1}-\frac{\delta\eta_{1}}{\ell^{2}}]$, we have $t<t_{j+1}-\frac{\delta\eta_{1}^{2}}{\ell^{2}}<T^{*}-\frac{\delta\eta_{1}^{2}}{\ell^{2}}$, and therefore
\begin{align*}
side(E_{1})=\frac{1}{\ell}<C(T^{*}-t)^{1/2}.
\end{align*}
Furthermore
\begin{align*}
|P_{\tau}u(t)| \lesssim |u(t)|\ast \phi_{\ell},
\end{align*}
where $\phi_{\ell}(x)={\ell}^{2}\phi(\ell x)$ and $\phi$ is a smooth bump function supported on $[-1,1]^{2}$.  This gives
\begin{align}
|P_{\tau}u(t)|^{2} \lesssim |u(t)|^{2}\ast \phi_{\ell}.
\end{align}
Combining (4.19) and (4.20)
\begin{align*}
c\eta_{1}^{2} \lesssim \int_{E_{1}}|u(t)|^{2}\ast \phi_{\ell}dx = \langle|u(t)|^{2},\chi_{E_{1}}\ast \phi_{\ell}\rangle \lesssim \int_{E_{2}}|u(t)|^{2}dx ,
\end{align*}
for some square $E_{2}$ of sidelength $side(E_{2})=C(side(E_{1})) < C'(T^{*}-t)^{1/2}$.  Thus
\begin{align*}
\int_{E_{2}}|u(t)|^{2}dx > c\eta_{1}^{2}=: c' ,
\end{align*}
where $c'>0$ is a constant independent of $j$.  As this argument applies for each $j$, the expression $(1.11)$ follows.  This concludes the proof of Theorem 7.
\qed

\end{document}